\numberwithin{equation}{section} 
\newenvironment{pdeq}{ \left\{ \begin{aligned}}{\end{aligned}\right.}
\newcommand{\eqrefsub}[2]{\ensuremath{\eqref{#1}_{#2}}}
\newcommand{\np}[1]{(#1)}
\newcommand{\nb}[1]{[#1]}
\newcommand{\bp}[1]{\big(#1\big)}
\newcommand{\bb}[1]{\big[#1\big]}
\newcommand{\Bp}[1]{\bigg(#1\bigg)}
\newcommand{\Bb}[1]{\bigg[#1\bigg]}
\newcommand{\calp}{{\mathcal P}}
\newcommand{\calt}{{\mathcal T}}
\newcommand{\R}{\mathbb{R}}
\newcommand{\Z}{\mathbb{Z}}
\newcommand{\N}{\mathbb{N}}
\DeclareMathOperator{\e}{e}
\DeclareMathOperator{\Div}{div}
\DeclareMathOperator{\supp}{supp}
\newcommand{\set}[1]{\ensuremath{\{#1\}}}
\newcommand{\setL}[1]{\ensuremath{\biggl\{#1\biggr\}}}
\newcommand{\setc}[2]{\ensuremath{\{#1\ \vert\ #2\}}}
\newcommand{\ball}{\mathrm{B}}
\newcommand{\proj}{\calp}
\newcommand{\projcompl}{\calp_\bot}
\newcommand{\grp}{G}
\newcommand{\dualgrp}{\widehat{G}}
\newcommand{\torus}{{\mathbb T}}
\newcommand{\idmatrix}{I}
\newcommand{\Rn}{{\R^n}}
\newcommand{\grad}{\nabla}
\newcommand{\D}{{\mathrm D}}
\newcommand{\dxt}{{\mathrm d}(x,t)}
\newcommand{\dtx}{{\mathrm d}(t,x)}
\newcommand{\dtheta}{{\mathrm d}\theta}
\newcommand{\dtau}{{\mathrm d}\tau}
\newcommand{\ds}{{\mathrm d}s}
\newcommand{\dt}{{\mathrm d}t}
\newcommand{\dy}{{\mathrm d}y}
\newcommand{\TDR}{\mathscr{S^\prime}}
\newcommand{\ft}[1]{\widehat{#1}}
\newcommand{\FT}{\mathscr{F}}
\newcommand{\iFT}{\mathscr{F}^{-1}}
\newcommand{\norm}[1]{\lVert#1\rVert}
\newcommand{\snorm}[1]{{\lvert #1 \rvert}}
\newcommand{\snorml}[1]{{\bigl\lvert #1 \big\rvert}}
\newcommand{\snormL}[1]{{\Bigl\lvert #1 \Big\rvert}}
\newcommand{\LR}[1]{\mathrm{L}^{#1}}
\newcommand{\LRloc}[1]{\mathrm{L}^{#1}_{\mathrm{loc}}} 
\newcommand{\CR}[1]{\mathrm{C}^{#1}}  
\newcommand{\CRi}{\CR \infty}
\newcommand{\CRci}{\CR \infty_0}
\newcommand{\WSR}[2]{\mathrm{W}^{#1,#2}}
\newcommand{\DSRN}[2]{\mathrm{D}^{#1,#2}_0}
\newcommand{\DSRNsigma}[2]{\mathrm{D}^{#1,#2}_{0,\sigma}}
\newcommand{\CRcisigma}{\CR{\infty}_{0,\sigma}}
\newcommand{\vvel}{v}
\newcommand{\vpres}{p}
\newcommand{\wvel}{w}
\newcommand{\uvel}{u}
\newcommand{\upres}{\mathfrak{p}}
\newcommand{\Uvel}{U}
\newcommand{\wakefct}[1]{s(#1)}
\newcommand{\fundsolvel}{\varGamma^\rey}
\newcommand{\fundsolveljl}{\varGamma^\rey_{j\ell}}
\newcommand{\fundsolcompl}{\varGamma^\bot}
\newcommand{\fundsolvelss}{\fundsolvel_0}
\newcommand{\fundsolvelssjl}{\fundsolvel_{0,j\ell}}
\newcommand{\fundsolvelpp}{\fundsolvel_\perp}
\newcommand{\tin}{\text{in }}
\newcommand{\tfor}{\text{for }}
\newcommand{\half}{\frac{1}{2}}
\renewcommand{\epsilon}{\varepsilon}
\newcommand{\rey}{\lambda}
\newcommand{\per}{\calt}
\newcommand{\perf}{\frac{2\pi}{\per}}
\newcommand{\perft}{\tfrac{2\pi}{\per}}
\newcommand{\eone}{\e_1}
\newcommand{\restterm}{\mathscr{R}}
\newcommand{\cutoff}{\chi}
\newcommand{\onedist}{1}
\newcommand{\change}[1]{}
\theoremstyle{plain}
\newtheorem{thm}{Theorem}[section]
\newtheorem{defn}[thm]{Definition}
\newtheorem{lem}[thm]{Lemma}
\newtheorem{prop}[thm]{Proposition}
\theoremstyle{remark}
\newtheorem{rem}[thm]{Remark}
\begin{document}
\title{On the spatially asymptotic structure of time-periodic solutions to the Navier--Stokes equations}

\author{Thomas Eiter%
}

\maketitle

\begin{abstract}
The asymptotic behavior of weak time-periodic solutions to the Navier--Stokes equations
with a drift term 
in the three-dimensional whole space 
is investigated. 
The velocity field is decomposed into a time-independent and a remaining part,
and separate asymptotic expansions are derived
for both parts and their gradients.
One observes that the behavior at spatial infinity is determined 
by the corresponding Oseen fundamental solutions.
\end{abstract}

\noindent
\textbf{MSC2010:} Primary 35Q30, 35B10, 35C20, 76D05, 35E05.
\\
\noindent
\textbf{Keywords:} Navier-Stokes, time-periodic solutions, asymptotic expansion,
Oseen system, fundamental solution.

\section{Introduction}
\label{sec:Introduction}

We study the behavior for $\snorm{x}\to\infty$ of
time-periodic solutions to the Navier--Stokes equations 
\begin{equation}\label{sys:NStp_Decay}
\begin{pdeq}
\partial_t\uvel-\Delta\uvel-\rey\partial_1\uvel+\uvel\cdot\grad\uvel+\grad\upres&=f
&&\tin\torus\times\R^3, \\
\Div\uvel&=0 
&&\tin\torus\times\R^3, \\
\lim_{\snorm{x}\to\infty}\uvel(t,x)&= 0
&&\tfor t\in\torus,
\end{pdeq}
\end{equation}
which model the flow of a viscous incompressible fluid.
Here $f\colon\torus\times\R^3\to\R^3$ is an external force,
and $\uvel\colon\torus\times\R^3\to\R^3$ and $\upres\colon\torus\times\R^3\to\R$ 
denote velocity and pressure fields of the fluid flow. 
The torus group $\torus\coloneqq\R/\per\Z$ serves as time axis
and encodes that all involved functions are time periodic 
with prescribed period $\per>0$.
In this paper, we consider the case $\rey\neq0$,
which models a non-vanishing inflow velocity $\rey\eone$ at infinity. 
Asymptotic properties in the case $\rey=0$ are different and shall not be treated here.

For $\rey\neq0$ the pointwise decay
of time-periodic solutions to \eqref{sys:NStp_Decay}
was studied by \textsc{Galdi} and \textsc{Sohr} \cite{GaldiSohr2004}
and by \textsc{Galdi} and \textsc{Kyed} \cite{GaldiKyed_TPSolNS3D_AsymptoticProfile}.
By \cite{GaldiKyed_TPSolNS3D_AsymptoticProfile} a weak solution $\uvel$ to \eqref{sys:NStp_Decay}
satisfies
\begin{equation}\label{eq:VelocityExpansion_Intro}
\uvel(t,x)
=\fundsolvelss(x)\cdot\int_\torus\int_{\R^3}f(s,y)\,\dy\ds 
+ \restterm(t,x),
\end{equation}
where $\fundsolvelss$ is the fundamental solution to the steady-state Oseen system
\begin{equation}\label{sys:NSlinss_TPFS}
\begin{pdeq}
-\Delta\vvel - \rey\partial_1\vvel +\grad\vpres &= f && \tin\R^3, \\
\Div\vvel &=0 && \tin\R^3,
\end{pdeq}
\end{equation}
and the remainder term satisfies $\snorm{\restterm(t,x)}\leq C\snorm{x}^{-3/2+\varepsilon}$.
In particular, 
\eqref{eq:VelocityExpansion_Intro} shows that 
the asymptotic behavior of the velocity field $\uvel$ is, in general, determined by
the steady-state Oseen fundamental solution $\fundsolvelss$. 
Moreover, \eqref{eq:VelocityExpansion_Intro} coincides with 
the anisotropic expansion of weak solutions to the corresponding steady-state problem,
which is due to 
\textsc{Finn} \cite{Finn1959b, Finn1965a}, \textsc{Babenko} \cite{babenko1973} 
and \textsc{Galdi} \cite{GaldiBookNew}
and may be seen as a special case of the time-periodic setting.

The main theorem of this paper, Theorem \ref{thm:VelocityExpansion} below, extends 
the results from \cite{GaldiKyed_TPSolNS3D_AsymptoticProfile}
in several ways.
Firstly, we improve the pointwise estimate of $\restterm(t,x)$
in such a way that it reflects the anisotropic structure of the solution.
Secondly, we derive an asymptotic expansion for $\grad\uvel$
by establishing pointwise estimates of $\grad\restterm(t,x)$.
Thirdly, we decompose $\uvel$ into its time mean over one period $\proj\uvel$ 
and a time-periodic remainder $\projcompl\uvel=\uvel-\proj\uvel$,
for which we derive separate asymptotic expansions. 
We shall observe that the asymptotic properties of the steady-state part $\proj\uvel$ 
are governed by the steady-state fundamental solution $\fundsolvelss$,
while those of the purely periodic part $\projcompl\uvel$
are determined by $\fundsolvelpp$,
the (faster decaying) purely periodic part of the fundamental solution $\fundsolvel$ 
to the time-periodic Oseen system 
\begin{equation}\label{sys:NSlintp_TPFS}
\begin{pdeq}
\partial_t\uvel-\Delta\uvel - \rey\partial_1\uvel +\grad\upres &= f 
&& \tin\torus\times\R^3, \\
\Div\uvel &=0 && \tin\torus\times\R^3.
\end{pdeq}
\end{equation}
In particular, this shows that the purely periodic part $\projcompl\uvel$ 
decays faster than the steady-state part $\proj\uvel$ as $\snorm{x}\to\infty$.

This paper is structured as follows. 
After introducing the basic notation 
in Section \ref{sec:Notation},
we recall the fundamental solution to the time-periodic Oseen equations
and collect related results
in Section \ref{sec:TPFundSol}.
In Section \ref{sec:ProofMainResults} we present and prove 
our main theorems.

\section{Notation}
\label{sec:Notation}

In general, 
we denote points in $\torus\times\R^3$ by $(t,x)$ and call $t\in\torus$ time variable and $x\in\R^3$ 
spatial variable, respectively.
For a sufficiently regular function $\uvel\colon\torus\times\R^3\to\R^3$ 
we write $\partial_j\uvel\coloneqq\partial_{x_j}\uvel$,
and we set $\Delta\uvel\coloneqq\partial_j\partial_j\uvel$ 
and $\Div\uvel\coloneqq\partial_j\uvel_j$.
As in this definition, we use Einstein's summation convention frequently.
If $\Uvel\colon\torus\times\R^3\to\R^{3\times3}$ is matrix valued,
the vector field $\Div\Uvel$ is defined by $(\Div\Uvel)_j=\partial_k\Uvel_{jk}$.

For $R>r>0$ and $x\in\R^3$ we set $\ball_R(x)\coloneqq\setc{y\in\R^3}{\snorm{x-y}<R}$,
$\ball^R(x)\coloneqq\setc{y\in\R^3}{\snorm{x-y}>R}$
and $\ball_{r,R}(x)\coloneqq \ball^{r}(x)\cap\ball_{R}(x)$.
If $x=0$, we simply write $\ball_R\coloneqq\ball_R(0)$, $\ball^R\coloneqq\ball^R(0)$
and $\ball_{r,R}\coloneqq\ball_{r,R}(0)$.
For vectors $a,b\in\R^3$ their tensor product $a\otimes b$ 
is defined by $(a\otimes b)_{jk}=a_jb_k$.

By $\LR{q}(\Omega)$ and $\WSR{k}{q}(\Omega)$ 
we denote classical Lebesgue and Sobolev spaces,
and we set
\[
\CRcisigma(\R^3)\coloneqq\setc{\varphi\in\CRci(\R^3)^3}{\Div\varphi=0},
\qquad \DSRNsigma{1}{2}(\R^3)\coloneqq\overline{\CRcisigma(\R^3)}^{\norm{\grad\cdot}_2}.
\]

Observe that $\grp\coloneqq\torus\times\R^3$ is a locally compact abelian group
and that its dual group can be identified with $\dualgrp=\Z\times\R^3$,
the elements of which we denote by $(k,\xi)\in\Z\times\R^3$.
We equip the group $\torus$ with the normalized Haar measure given by
\[
\forall f\in\CR{}(\torus):\qquad
\int_\torus f(t)\,\dt=\frac{1}{\per}\int_0^\per f(t)\,\dt,
\]
and $\grp$ with the corresponding product measure.
Moreover, $\FT_\grp$ denotes the Fourier transform on $\grp$
with inverse $\iFT_\grp$.
Then $\FT_\grp$ is an isomorphism $\FT_\grp\colon\TDR(\grp)\to\TDR(\dualgrp)$,
where $\TDR(\grp)$ is the space of tempered distributions on $\grp$,
which was introduced by \textsc{Bruhat} \cite{Bruhat61}; see also \cite{EiterKyed_tplinNS_PiFbook}.
Moreover, for $f\colon\torus\times\R^3\to\R$
we set
\[
\proj f (x)\coloneqq \int_\torus f(t,x)\,\dt, 
\qquad \projcompl f\coloneqq f-\proj f
\]
such that $f=\proj f+\projcompl f$.
Since $\proj f$ is time independent, we call 
$\proj f$ the \emph{steady-state} part and $\projcompl f$ the \emph{purely periodic} part of $f$.
A straightforward calculation shows
\[
\proj f = \iFT_\grp \bb{\delta_\Z(k) \FT_\grp\nb{f}}, 
\qquad
\projcompl f = \iFT_\grp \bb{\np{1-\delta_\Z(k)} \FT_\grp\nb{f}}, 
\]
where $\delta_\Z$ is the delta distribution on $\Z$.

By the letter $C$ we denote generic positive constants. 
In order to specify the dependence of $C$ on quantities $a, b, \ldots$, 
we write $C(a, b, \ldots)$.

\section{The time-periodic fundamental solution}
\label{sec:TPFundSol}

In this section, we consider a fundamental solution $\fundsolvel$ to the time-periodic problem 
\eqref{sys:NSlintp_TPFS} such that the velocity field is given by $\uvel=\fundsolvel\ast f$,
where the convolution is taken with respect to the group $\grp=\torus\times\R^3$.
Such a fundamental solution was recently introduced in 
\cite{GaldiKyed_TPSolNS3D_AsymptoticProfile, EiterKyed_etpfslns}
and is given by 
\begin{equation}
\fundsolvel \coloneqq \fundsolvelss\otimes \onedist_{\torus} + \fundsolvelpp,
\label{eq:tpfundsol_decompVel}
\end{equation} 
where 
\begin{align}
&\fundsolvelss\colon\R^3\setminus\set{0}\to\R^{3\times3}, \quad 
\fundsolvelssjl(x)
\coloneqq\frac{1}{4\pi\rey}\bb{\delta_{j\ell}\Delta-\partial_j\partial_\ell}
\int_0^{\wakefct{\rey x} /2} \frac{1-\e^{-\tau}}{\tau}\,\dtau, 
\label{eq:OseenFundSolss_3d}\\
&\fundsolvelpp \coloneqq \iFT_\grp\Bb{ 
\frac{1-\delta_\Z(k)}{\snorm{\xi}^2 + i(\perf k - \rey \xi_1)}\,
\Bp{\idmatrix - \frac{\xi\otimes\xi}{\snorm{\xi}^2}}}.
\label{eq:tpfundsol_deffundsolcompl}
\end{align}
Here the symbol $\onedist_{\torus}$ denotes the constant $1$ distribution on $\torus$, and
\[
\wakefct{x}\coloneqq \snorm{x}+x_1.
\]
The function $\fundsolvelss$ is the fundamental solution to the steady-state Oseen problem
\eqref{sys:NSlinss_TPFS};
see \cite[Section VII.3]{GaldiBookNew}. 
Its anisotropic behavior is reflected by the pointwise estimates
\begin{equation}
\forall \alpha\in\N_0^3\ \forall \epsilon>0\ \exists C>0\ \forall \snorm{x}\geq \epsilon:\quad  
\snorm{\D^\alpha \fundsolvelss(x)} \leq  C\bb{\snorm{x}\np{1+\wakefct{\rey x}}}
^{-1-\frac{\snorm{\alpha}}{2}};
\label{est:fundsolss_Decay}
\end{equation}
see \cite[Lemma 3.2]{Farwig_habil}.
The examination of convolutions of $\fundsolvelss$ with functions satisfying similar estimates 
was carried out by 
\textsc{Farwig} \cite{Farwig_habil,FarwigOseenAnisotropicallyWeightedSob} 
in dimension $n=3$, 
and later by 
\textsc{Kra\v cmar}, \textsc{Novotn\'y} and \textsc{Pokorn\'y}
\cite{KracmarNovotnyPokorny2001} in the general $n$-dimensional case.
The following theorem collects some of their results.

\begin{thm}\label{thm:ConvFundsolss}
Let $A\in[2,\infty)$, $B\in[0,\infty)$ and $g\in\LR{\infty}(\R^3)$ 
such that $\snorm{g(x)}\leq M\np{1+\snorm{x}}^{-A}\np{1+\wakefct{x}}^{-B}$. 
Then there exists
$ C= C(A,B,\rey)>0$
with the following properties:
\begin{enumerate}
\item
If $A+\min\set{1,B}>3$, then
\[
\snorml{\snorm{\fundsolvelss}\ast g (x)} 
\leq  C M \bb{\np{1+\snorm{x}}\bp{1+\wakefct{\rey x}}}^{-1}.
\]
\item
If $A+\min\set{1,B}>3$ and $A+B\geq7/2$, then
\[
\snorml{\snorm{\grad\fundsolvelss}\ast g (x)} 
\leq  C M \bb{\np{1+\snorm{x}}\bp{1+\wakefct{\rey x}}}^{-3/2}.
\]
\item
If $A+\min\set{1,B}=3$ and $A+B\geq7/2$, then
\[
\snorml{\snorm{\grad\fundsolvelss}\ast g (x)} 
\leq  C M \bb{\np{1+\snorm{x}}\bp{1+\wakefct{\rey x}}}^{-3/2}\max\set{1,\log\snorm{x}}.
\]
\item
If $A+B< 3$, then
\[
\snorml{\snorm{\grad\fundsolvelss}\ast g (x)} 
\leq  C M \np{1+\snorm{x}}^{-\np{A+B}/2}
\bp{1+\wakefct{\rey x}}^{-\np{A+B-1}/2}.
\]
\end{enumerate}
\end{thm}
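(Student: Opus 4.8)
The plan is to derive all four estimates from the pointwise bound \eqref{est:fundsolss_Decay} by splitting the convolution integral according to the paraboloidal wake geometry. Write $w(y)\coloneqq\np{1+\snorm{y}}^{-A}\np{1+\wakefct{y}}^{-B}$, so that $\snorm{g}\leq Mw$, and set $k_\beta(z)\coloneqq\bb{\np{1+\snorm{z}}\np{1+\wakefct{\rey z}}}^{-\beta}$. Near the origin $\fundsolvelss$ and $\grad\fundsolvelss$ have the Stokes-type singularities $\snorm{z}^{-1}$ and $\snorm{z}^{-2}$, which are locally integrable on $\R^3$ and whose convolution with $w$ contributes at most $C\,w(x)$, dominated by each claimed right-hand side. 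Away from the origin \eqref{est:fundsolss_Decay} gives $\snorm{\fundsolvelss(z)}\lesssim k_1(z)$ and $\snorm{\grad\fundsolvelss(z)}\lesssim k_{3/2}(z)$, so it remains to bound $\int_{\R^3}k_\beta(x-y)\,w(y)\,\dy$ for $\beta=1$ (first claim) and $\beta=3/2$ (the other three). After rescaling $(x,y)\mapsto\rey^{-1}(x,y)$ one may take $\rey=1$ at the cost of $\rey$-dependent constants. Two elementary facts will be used repeatedly: $\wakefct{\cdot}$ is subadditive, $\wakefct{z_1+z_2}\leq\wakefct{z_1}+\wakefct{z_2}$; and on $\set{\snorm{z}=r}$ the surface measure equals $2\pi r\,\dtau$ in the variable $\tau\coloneqq\wakefct{z}\in\interval{0}{2r}$, whence $\int_{\R^3}w\,\dy<\infty$ exactly when $A+\min\set{1,B}>3$ (with a logarithmic divergence at equality, the cap at $B=1$ reflecting the codimension-two cross-section of the wake), while $\int_{\ball_R}k_1\,\dz\sim R\log R$ and $\int_{\ball_R}k_{3/2}\,\dz\sim R^{1/2}$.

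For the first three claims, fix $\snorm{x}$ large and write $\R^3=\Omega_1\cup\Omega_2\cup\Omega_3$, where $\Omega_1\coloneqq\setc{y}{\snorm{y}\leq\snorm{x}/2}$, $\Omega_2\coloneqq\setc{y}{\snorm{x-y}\leq\snorm{x}/2}$, and $\Omega_3$ is the remainder. On $\Omega_1$ one has $\snorm{x-y}\sim\snorm{x}$, and splitting $\Omega_1$ according to whether $\wakefct{y}\leq\thalf\wakefct{x}$ — where subadditivity yields $1+\wakefct{\rey(x-y)}\gtrsim1+\wakefct{\rey x}$ and hence $k_\beta(x-y)\lesssim k_\beta(x)$, away from a thin ``shadow'' set behind $x$ that must be treated separately — or not, one bounds the $\Omega_1$-contribution by $k_\beta(x)\int_{\R^3}w\,\dy$ plus lower-order terms; this is finite precisely under $A+\min\set{1,B}>3$, the borderline case contributing the logarithm in the third claim. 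On $\Omega_2$ one sets $z=x-y$ and separates $\snorm{z}\leq1$, where $k_\beta(z)\sim\snorm{z}^{-\beta}$ integrates to a constant and $w(x-z)\sim w(x)$, from $\snorm{z}>1$, where one uses the anisotropic bound for $k_\beta$ and splits once more according to whether $\wakefct{z}\leq\thalf\wakefct{x}$ or not, distributing the weight of $w(x-z)$ between its two factors; carrying this out and comparing with the claimed bound — which is strongest on the negative $x_1$-axis, where $\wakefct{\rey x}\sim\snorm{x}$ — is exactly what forces $A+B>3$ for $\snorm{\fundsolvelss}\ast g$ and $A+B\geq7/2$ for $\snorm{\grad\fundsolvelss}\ast g$. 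Finally, on $\Omega_3$ both $\snorm{y}\gtrsim\snorm{x}$ and $\snorm{x-y}\gtrsim\snorm{x}$, and a dyadic decomposition in $\snorm{x-y}$ shows this term is dominated by the previous two. Assembling the three regions yields the first three claims.

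For the fourth claim $A+B<3$, so $\int_{\R^3}w=\infty$ and the $\Omega_1$-step can no longer be closed by pulling out $k_{3/2}(x)$, since the mass of $\int_{\snorm{y}\leq R}w$ now concentrates on the outer shell $\snorm{y}\sim R$. The remedy is a finer dyadic decomposition in $\snorm{y}$ that balances, shell by shell, the $\snorm{\cdot}^{-3/2}$-decay of $k_{3/2}$ against the slow growth of the partial masses of $w$, additionally tracking the wake factors on both sides; summing the resulting series produces exactly the self-similar rate $\np{1+\snorm{x}}^{-(A+B)/2}\np{1+\wakefct{\rey x}}^{-(A+B-1)/2}$, and the standing assumption $A\geq2$ keeps the $\Omega_2$- and $\Omega_3$-pieces below this bound.

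The triangle-inequality bookkeeping on $\Omega_1$ away from the shadow set and on $\Omega_3$ is routine; the genuinely delicate points — and the reason the sharp statement is not elementary — are the analysis of the shadow set in $\Omega_1$, where the naive comparison $k_\beta(x-y)\lesssim k_\beta(x)$ breaks down and the wake geometry must be exploited directly; the precise behaviour of $\int_{\ball_{\snorm{x}/2}}k_{3/2}$ relative to the position of $x$, which is what makes $\Omega_2$ compatible with the exponent $3/2$ only under $A+B\geq7/2$; and the exact threshold $A+\min\set{1,B}>3$ together with the fact that the borderline contributes a \emph{single} logarithm. All of these anisotropic estimates, carried out in paraboloidal coordinates, are precisely those of \textsc{Farwig} \cite{Farwig_habil,FarwigOseenAnisotropicallyWeightedSob} for $n=3$ and of \textsc{Kra\v cmar}, \textsc{Novotn\'y} and \textsc{Pokorn\'y} \cite{KracmarNovotnyPokorny2001} in general dimension, and the efficient route is to quote their lemmas rather than to reprove them.
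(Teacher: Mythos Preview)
Your proposal ultimately lands on the same point as the paper: the paper's entire proof is the single sentence ``These are special cases of \cite[Theorems 3.1 and 3.2]{KracmarNovotnyPokorny2001},'' and you conclude by recommending exactly that citation. The extended sketch you provide of the underlying argument --- the three-region split $\Omega_1\cup\Omega_2\cup\Omega_3$, the subadditivity of $\wakefct{\cdot}$, the paraboloidal surface measure $2\pi r\,\dtau$, and the dyadic refinement for $A+B<3$ --- is a fair outline of what \textsc{Farwig} and \textsc{Kra\v cmar--Novotn\'y--Pokorn\'y} actually do, and it correctly identifies where the thresholds $A+\min\set{1,B}>3$ and $A+B\geq7/2$ enter; but none of this is carried out in the paper, which simply defers to the references. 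So your approach is not different from the paper's --- it is the paper's approach together with an expository gloss on the cited results.
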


\begin{proof}
These are special cases of \cite[Theorems 3.1 and 3.2]{KracmarNovotnyPokorny2001}.
\end{proof}

In order to derive a similar result to control convolutions 
with the purely periodic part $\fundsolvelpp$, 
we recall the following theorem established in \cite{EiterKyed_etpfslns}.

\begin{thm}\label{thm:tpfundsol}
The purely periodic velocity fundamental solution $\fundsolvelpp$ satisfies
\begin{align}
&\forall q\in\Bp{1,\frac{5}{3}}:\quad \fundsolcompl\in\LR{q}(\grp)^{3\times 3},
\label{est:tpfundsol_ComplSummability}\\
&\forall q\in\bigg[1,\frac{5}{4}\bigg):\quad 
\partial_j\fundsolcompl\in\LR{q}(\grp)^{3\times 3}\quad(j=1,2,3),
\label{est:tpfundsol_ComplSummabilityGradient}\\
&\forall \alpha \in \N_0^3 \ 
\forall r\in [1,\infty)\ \forall\epsilon>0\ \exists C>0\ \forall \snorm{x}\geq \epsilon:\  
\norm{\D_x^\alpha \fundsolcompl(\cdot,x)}_{\LR{r}(\torus)} \leq C\snorm{x}^{-3-\snorm{\alpha}}.
\label{est:tpfundsol_ComplPointwiseEst}
\end{align}
\end{thm}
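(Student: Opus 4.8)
\noindent
Theorem~\ref{thm:tpfundsol} is established in \cite{EiterKyed_etpfslns}; here is the line of argument one would follow. Everything is extracted from the Fourier symbol
\[
\mathbb{M}(k,\xi)\coloneqq\frac{1-\delta_\Z(k)}{\snorm{\xi}^2+i(\perf k-\rey\xi_1)}\,\mathbb{P}(\xi),
\qquad
\mathbb{P}(\xi)\coloneqq\idmatrix-\frac{\xi\otimes\xi}{\snorm{\xi}^2},
\]
so that $\fundsolvelpp=\iFT_\grp[\mathbb{M}]$ on $\grp=\torus\times\R^3$. The plan rests on two structural facts. First, for $k\neq0$ the denominator never vanishes --- its squared modulus $\snorm{\xi}^4+(\perf k-\rey\xi_1)^2$ is zero only for $\xi=0$ and $\perf k=0$ --- so $\mathbb{M}$ is bounded on $\Z\times\R^3$ and smooth on $(\Z\setminus\set{0})\times(\R^3\setminus\set{0})$, its sole singularity being the bounded jump of the Leray symbol $\mathbb{P}$ at $\xi=0$. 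Second, deleting the mode $k=0$ removes the only obstruction to parabolicity: $\fundsolvel$ equals (up to a multiplicative constant) the periodization $\sum_{n\in\Z}E(\cdot+n\per,\cdot)$ of the fundamental solution $E$ of the \emph{evolutionary} Oseen system $\partial_t\vvel-\Delta\vvel-\rey\partial_1\vvel+\grad\vpres=f$, $\Div\vvel=0$ on $\R\times\R^3$, with $E(s,\cdot)=\mathbb{P}[g_s]$ and $g_s(y)=(4\pi s)^{-3/2}\e^{-\snorm{y-\rey s\eone}^2/4s}$ for $s>0$. Thus $\fundsolvelpp=\fundsolvel-\fundsolvelss\otimes\onedist_\torus$ carries near the space--time origin exactly the singularity of $E$, whereas $\fundsolvelss\otimes\onedist_\torus$ contributes only the milder steady Oseen singularity.

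For the pointwise bound \eqref{est:tpfundsol_ComplPointwiseEst}, fix $\snorm{x}\geq\epsilon$ and write $\fundsolvelpp(t,x)=\sum_{k\neq0}\e^{i\perf kt}\Gamma_k(x)$ with $\Gamma_k=\iFT_{\R^3}\bb{\mathbb{P}(\xi)\bp{\snorm{\xi}^2+i(\perf k-\rey\xi_1)}^{-1}}$, and split the $\xi$--integral by a smooth cut-off into the regions $\snorm{\xi}\leq1$ and $\snorm{\xi}\geq1$. On $\snorm{\xi}\geq1$ integrate by parts in $\xi$ arbitrarily often: every derivative yields a factor $\snorm{x}^{-1}$ and either falls on $\mathbb{P}$ (gaining $\snorm{\xi}^{-1}$) or on the reciprocal denominator, which only improves summability in $k$ since $\snorm{\snorm{\xi}^2+i(\perf k-\rey\xi_1)}^{-1}\leq C\bp{1+\snorm{\perf k-\rey\xi_1}}^{-1}$ and further differentiation inserts further negative powers; this gives, on this piece, $\snorm{\D_x^\alpha\Gamma_k(x)}\leq C(N,\epsilon,\alpha)\bp{1+\snorm{k}}^{-N}\snorm{x}^{-N}$ for every $N$, with a convergent series. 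On $\snorm{\xi}\leq1$ the truncated symbol is bounded and piecewise smooth with a jump at $\xi=0$, so its inverse transform decays like $\snorm{x}^{-3}$; the factor $(i\xi)^\alpha$ attached to $\D_x^\alpha$ forces vanishing of order $\snorm{\alpha}$ at $\xi=0$ and upgrades this to $\snorm{x}^{-3-\snorm{\alpha}}$, while the $k$--dependence sits only in the bounded factor $\bp{\snorm{\xi}^2+i(\perf k-\rey\xi_1)}^{-1}=O(\snorm{k}^{-1})$, smooth in $\xi$ on $\snorm{\xi}\leq1$ once $\snorm{k}$ is large, so the series again converges after isolating the finitely many small $\snorm{k}$. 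Altogether $\snorm{\D_x^\alpha\Gamma_k(x)}\leq C(\epsilon,\alpha)\bp{1+\snorm{k}}^{-2}\snorm{x}^{-3-\snorm{\alpha}}$ for $\snorm{x}\geq\epsilon$; since $\torus$ carries unit mass, $\norm{\D_x^\alpha\fundsolvelpp(\cdot,x)}_{\LR{r}(\torus)}\leq\sum_{k\neq0}\snorm{\D_x^\alpha\Gamma_k(x)}\leq C(\epsilon,\alpha)\snorm{x}^{-3-\snorm{\alpha}}$.

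For the summability statements \eqref{est:tpfundsol_ComplSummability} and \eqref{est:tpfundsol_ComplSummabilityGradient}, split $\grp=(\torus\times\ball^1)\cup(\torus\times\ball_1)$. On $\torus\times\ball^1$ the estimate just obtained gives $\int_{\ball^1}\norm{\D_x^\alpha\fundsolvelpp(\cdot,x)}_{\LR{q}(\torus)}^q\,\dx\leq C\int_{\ball^1}\snorm{x}^{-(3+\snorm{\alpha})q}\,\dx$, finite precisely when $(3+\snorm{\alpha})q>3$, i.e. $q>1$ for $\alpha=0$ and all $q\geq1$ for $\snorm{\alpha}=1$. On $\torus\times\ball_1$ the second structural fact gives, up to a term smooth there, $\fundsolvelpp(t,x)=E(t,x)\mathbf{1}_{t>0}-\fundsolvelss(x)$; from the Gaussian shape of $g_s$, the $\LR{q}$--boundedness ($1<q<\infty$) of $\mathbb{P}$, and the parabolic scaling $E(s,x)=s^{-3/2}\Phi\bp{x/\sqrt{s}-\rey\sqrt{s}\,\eone}$ with $\Phi=\mathbb{P}\gamma$ ($\gamma$ the unit Gaussian), one obtains $\norm{E(s,\cdot)}_{\LR{q}(\R^3)}=Cs^{-\frac32(1-1/q)}$ and $\norm{\grad E(s,\cdot)}_{\LR{q}(\R^3)}=Cs^{-\frac32(1-1/q)-\frac12}$, whence $\int_0^1\norm{E(s,\cdot)}_{\LR{q}}^q\,\ds<\infty$ iff $q<5/3$ and $\int_0^1\norm{\grad E(s,\cdot)}_{\LR{q}}^q\,\ds<\infty$ iff $q<5/4$ --- the gradient norm remaining finite down to $q=1$ because $\int_{\R^3}\grad g_s=0$ forces the extra decay putting $\grad E(s,\cdot)\in\LR{1}(\R^3)$. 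Since moreover $\fundsolvelss\in\LR{q}(\ball_1)$ for $q<3$ and $\grad\fundsolvelss\in\LR{q}(\ball_1)$ for $q<3/2$, the parabolic exponents bind; assembling the two regions proves \eqref{est:tpfundsol_ComplSummability} for $q\in(1,5/3)$ and \eqref{est:tpfundsol_ComplSummabilityGradient} for $q\in[1,5/4)$.

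The conceptual skeleton is short, but two points carry the real work. The identification $\fundsolvel=\sum_{n\in\Z}E(\cdot+n\per,\cdot)$ must be justified in $\TDR(\grp)$: this requires uniqueness of the fundamental solution and convergence of the periodization, hence the spatial decay of $E$, which ultimately rests on the anisotropic Oseen estimates underlying \eqref{est:fundsolss_Decay} and Theorem~\ref{thm:ConvFundsolss}. And one must keep track of the endpoints: the $q=1$ case of \eqref{est:tpfundsol_ComplSummabilityGradient}, where $\mathbb{P}$ is not $\LR{1}$--bounded and one exploits $\int\grad g_s=0$; the exclusion of $q=1$ in \eqref{est:tpfundsol_ComplSummability}, coming solely from the borderline divergence of $\int_{\ball^1}\snorm{x}^{-3}\,\dx$; and, in \eqref{est:tpfundsol_ComplPointwiseEst}, the need to organise the integration by parts on $\snorm{\xi}\geq1$ and the isolated non-smoothness of $\mathbb{P}$ at $\xi=0$ so that the $k$--series stays summable while the decay comes out exactly as $\snorm{x}^{-3-\snorm{\alpha}}$. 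I expect this last item to be the main technical obstacle.
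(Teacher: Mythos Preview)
The paper's own proof of this theorem is a one-line citation: ``See \cite[Theorem 1.1]{EiterKyed_etpfslns}.'' You correctly identify this source at the very start of your proposal, so in that sense you match the paper exactly. Everything beyond your first sentence is additional: a sketch of how the argument in \cite{EiterKyed_etpfslns} might go, which the present paper does not attempt to reproduce.

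Since you go further than the paper, a brief comment on the sketch itself. The overall architecture --- periodization of the evolutionary Oseen kernel for the local-in-space behavior, and Fourier-side decay estimates for the far-field --- is indeed the strategy of \cite{EiterKyed_etpfslns}. Two places in your outline would need tightening if you actually carried this out. First, the claim that a bounded symbol with a jump at $\xi=0$ has inverse transform decaying like $\snorm{x}^{-3}$ is too loose as stated: the Leray symbol $\mathbb{P}$ is homogeneous of degree zero and its inverse transform involves Riesz transforms, so one really works with the full product $(i\xi)^\alpha\mathbb{P}(\xi)\bp{\snorm{\xi}^2+i(\perf k-\rey\xi_1)}^{-1}$ and extracts the decay from its combined regularity and vanishing order, not from $\mathbb{P}$ alone. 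Second, your handling of the $k$-summability on the low-frequency piece (``$O(\snorm{k}^{-1})$, smooth in $\xi$ once $\snorm{k}$ is large'') gives only $\sum\snorm{k}^{-1}$, which diverges; one needs to differentiate the resolvent factor as well to gain enough $k$-decay, and in \cite{EiterKyed_etpfslns} this is organized more carefully. These are exactly the points you yourself flag as the main technical obstacles, so your self-assessment is accurate.
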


\begin{proof}
See \cite[Theorem 1.1]{EiterKyed_etpfslns}.
\end{proof}

From these properties we conclude the following theorem.

\begin{thm}\label{thm:ConvFundsolpp}
Let $A\in(0,\infty)$ and $g\in\LR{\infty}(\torus\times\R^3)$ 
such that $\snorm{g(t,x)}\leq M\np{1+\snorm{x}}^{-A}$.
Then for any $\varepsilon>0$ there exists 
$C= C(A,\rey,\per,\varepsilon)>0$
such that
\begin{equation}
\forall \snorm{x}\geq \epsilon: \qquad
\snorml{\snorm{\grad\fundsolvelpp}\ast_{\grp} g (t,x)} 
\leq  C M 
\np{1+\snorm{x}}^{-\min\set{A,4}}
\label{est:ConvFundsolpp_Grad}
\end{equation}
and, if $A>3$,
\begin{equation}
\forall \snorm{x}\geq \epsilon: \qquad
\snorml{\snorm{\fundsolvelpp}\ast_{\grp} g (t,x)} 
\leq C M
\np{1+\snorm{x}}^{-3}.
\label{est:ConvFundsolpp}
\end{equation}
\end{thm}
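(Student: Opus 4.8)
The plan is to split the convolution integral $\snorm{\grad\fundsolvelpp}\ast_{\grp} g$ over three spatial regions and estimate each piece using a different tool from Theorem~\ref{thm:tpfundsol}. Write
\[
\snorml{\snorm{\grad\fundsolvelpp}\ast_\grp g(t,x)}
\leq \int_\torus\int_{\R^3}\snorm{\grad\fundsolvelpp(t-s,x-y)}\,\snorm{g(s,y)}\,\dy\ds,
\]
and decompose $\R^3 = \ball_{\snorm{x}/2} \cup \ball_{\snorm{x}/2}(x) \cup \Omega_{\mathrm{far}}$, where $\Omega_{\mathrm{far}} = \R^3\setminus(\ball_{\snorm{x}/2}\cup\ball_{\snorm{x}/2}(x))$ is the region where both $\snorm{y}\gtrsim\snorm{x}$ and $\snorm{x-y}\gtrsim\snorm{x}$. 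First I would set up these three regions carefully so that $\snorm{x}\geq\epsilon$ guarantees the balls behave as expected (for $\snorm{x}$ near $\epsilon$ one may need the crude global bound coming from $\grad\fundsolcompl\in\LR{q}(\grp)$ for $q\in[1,5/4)$ combined with $g\in\LR\infty$, which immediately handles the bounded region $\epsilon\leq\snorm{x}\leq R_0$; so the real work is for $\snorm{x}$ large).

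On the near-singularity region $\ball_{\snorm{x}/2}(x)$, the factor $\snorm{g(s,y)}\leq M(1+\snorm{y})^{-A}\lesssim M(1+\snorm{x})^{-A}$ comes out of the integral, and what remains is $\int_{\ball_{\snorm{x}/2}(x)}\int_\torus\snorm{\grad\fundsolvelpp(t-s,x-y)}\,\ds\,\dy$, which by Hölder in time and \eqref{est:tpfundsol_ComplSummabilityGradient} (with $q$ close to $1$, so the conjugate exponent $r$ is large) and \eqref{est:tpfundsol_ComplPointwiseEst} is bounded by a constant independent of $x$; this gives the $(1+\snorm{x})^{-A}$ contribution, hence the $-\min\{A,4\}$ bound is sharp only when $A\leq 4$. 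On the far region $\Omega_{\mathrm{far}}$, both arguments are large: use $\snorm{x-y}\geq\snorm{x}/2$ together with the pointwise estimate \eqref{est:tpfundsol_ComplPointwiseEst}, $\norm{\grad_x\fundsolcompl(\cdot,x-y)}_{\LR{r}(\torus)}\leq C\snorm{x-y}^{-4}\leq C(\snorm{x})^{-4+\theta}\snorm{x-y}^{-\theta}$ for a suitable $\theta$, and then integrate the residual factor against $\snorm{g}$; choosing the split of the power $\snorm{x-y}^{-4}$ between "pull out $(1+\snorm{x})^{-4}$" and "leave enough decay to integrate $(1+\snorm{y})^{-A}$" yields the $-\min\{A,4\}$ exponent. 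The region $\ball_{\snorm{x}/2}$ near the origin: here $\snorm{x-y}\geq\snorm{x}/2$ again so $\grad\fundsolvelpp$ is pointwise bounded by $C(1+\snorm{x})^{-4}$ uniformly in $y\in\ball_{\snorm{x}/2}$ (after the $\LR r(\torus)$-Hölder step in time), and $\int_{\ball_{\snorm{x}/2}}(1+\snorm{y})^{-A}\dy$ is either bounded (if $A>3$) or grows like $\snorm{x}^{3-A}$ (if $A<3$) or logarithmically (if $A=3$) — in every case the product is dominated by $C(1+\snorm{x})^{-\min\{A,4\}}$.

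For the second estimate \eqref{est:ConvFundsolpp}, assuming $A>3$, the argument is the same but simpler: replace $\grad\fundsolvelpp$ by $\fundsolvelpp$, use \eqref{est:tpfundsol_ComplSummability} (valid for $q\in(1,5/3)$) on the near-singularity ball and \eqref{est:tpfundsol_ComplPointwiseEst} with $\snorm{\alpha}=0$, i.e. $\norm{\fundsolcompl(\cdot,x-y)}_{\LR r(\torus)}\leq C\snorm{x-y}^{-3}$, on the two far regions; since now $A>3$, the integral $\int_{\ball_{\snorm{x}/2}}(1+\snorm{y})^{-A}\dy$ is finite, and the $\snorm{x-y}^{-3}$ decay on $\Omega_{\mathrm{far}}\cup\ball_{\snorm{x}/2}$ is exactly enough to produce the $(1+\snorm{x})^{-3}$ bound with a splitting $\snorm{x-y}^{-3}\leq C\snorm{x}^{-3+\theta}\snorm{x-y}^{-\theta}$ for small $\theta>0$ on the far region and the uniform bound $\snorm{x-y}^{-3}\leq C\snorm{x}^{-3}$ on $\ball_{\snorm{x}/2}$.

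The main obstacle I anticipate is the bookkeeping of the Hölder exponents in the time variable: Theorem~\ref{thm:tpfundsol} gives $\LR{r}(\torus)$-control of $\D_x^\alpha\fundsolcompl(\cdot,x)$ for every finite $r$ but only $\LR q$-control of $\grad\fundsolcompl$ on the \emph{whole} group $\grp$ for $q<5/4$. To estimate $\int_\torus\snorm{\grad\fundsolvelpp(t-s,x-y)}\snorm{g(s,y)}\ds$ I need to apply Hölder in $s$ with exponents $(q',q)$ (or $(r,r')$), and then the spatial integrals $\int\norm{\grad_x\fundsolcompl(\cdot,x-y)}_{\LR q(\torus)}\,(\dots)\,\dy$ must be shown finite — on the near-singularity ball this forces $q<5/4$ (so that $\grad\fundsolcompl\in\LR q(\torus\times\ball_1)$, which follows from \eqref{est:tpfundsol_ComplSummabilityGradient}), while on the far and origin regions the pointwise bound \eqref{est:tpfundsol_ComplPointwiseEst} with $r=\infty$-like behavior (any finite $r$, with $r'$ close to $1$) is what one wants. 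Reconciling ``$q$ close to $1$ near the singularity'' with ``$r$ large away from it'' requires choosing the exponent $r$ in \eqref{est:tpfundsol_ComplPointwiseEst} appropriately in each region and checking that the constant $C$ indeed depends only on $A$, $\rey$, $\per$, $\varepsilon$ (and not on $x$). Everything else is a routine splitting-of-powers computation of the type already standard in the Oseen literature (cf.\ the proof of Theorem~\ref{thm:ConvFundsolss}).
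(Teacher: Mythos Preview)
Your proposal is correct and follows essentially the same three-region strategy as the paper: the paper splits $\R^3$ into $\ball_R$, $\ball_{R,4R}$, $\ball^{4R}$ with $R=\snorm{x}/2$ (concentric regions rather than your two balls plus complement), uses the pointwise estimate \eqref{est:tpfundsol_ComplPointwiseEst} on $\ball_R$ and $\ball^{4R}$ (where $\snorm{x-y}$ is bounded below) and the $\LR{q}(\grp)$-integrability \eqref{est:tpfundsol_ComplSummability} together with H\"older on the annulus $\ball_{R,4R}$ containing $x$. Your anticipated ``main obstacle'' --- H\"older bookkeeping in the time variable --- is a non-issue here: since $g\in\LR{\infty}(\torus\times\R^3)$ one simply bounds $\snorm{g(s,y)}\leq M(1+\snorm{y})^{-A}$ uniformly in $s$, after which the time integral either becomes part of the full $\LR{q}(\grp)$-norm (near the singularity) or is absorbed into the $\LR{1}(\torus)$-bound from \eqref{est:tpfundsol_ComplPointwiseEst} (away from it); no matching of exponents between regions is required.
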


\begin{proof}
Let us focus on the derivation of \eqref{est:ConvFundsolpp}.
Let $x\in\R^3$, $\snorm{x}\geq\epsilon$ and set $R\coloneqq\snorm{x}/2$. 
Then we have 
\[
\snorml{\snorm{\fundsolvelpp}\ast_\grp g (t,x)}\leq M\np{I_1+I_2+I_3}
\]
with
\[
\begin{aligned}
I_1
&= \int_\torus\int_{\ball_R}\snorm{\fundsolvelpp(t-s,x-y)}\,\np{1+\snorm{y}}^{-A}\,\dy\ds,
\\
I_2
&= \int_\torus\int_{\ball^{4R}} \snorm{\fundsolvelpp(t-s,x-y)}\,\np{1+\snorm{y}}^{-A}\,\dy\ds,
\\
I_3
&= \int_\torus\int_{\ball_{R,4R}}\snorm{\fundsolvelpp(t-s,x-y)}\,\np{1+\snorm{y}}^{-A}\,\dy\ds.
\end{aligned}
\]
We estimate these terms separately.
Since $\snorm{y}\leq R$ implies $\snorm{x-y}\geq\snorm{x}-\snorm{y}\geq\snorm{x}/2=R\geq\epsilon/2$,
we can use \eqref{est:tpfundsol_ComplPointwiseEst} to estimate
\[
I_1
\leq  C \int_{\ball_R} \snorm{x-y}^{-3} \np{1+\snorm{y}}^{-A}\,\dy 
\leq  C  \snorm{x}^{-3} \int_{\R^3}\np{1+\snorm{y}}^{-A}\,\dy
\leq  C  \snorm{x}^{-3}.
\]
For $I_3$ we note that $\snorm{y}\geq 4R$ implies 
$\snorm{x-y}\geq\snorm{y}-\snorm{x}\geq\snorm{y}-\snorm{y}/2=\snorm{y}/2\geq 2R \geq \epsilon$. 
Therefore, \eqref{est:tpfundsol_ComplPointwiseEst} yields
\[
I_2
\leq  C \int_{\ball^{4R}} \snorm{x-y}^{-3} \np{1+\snorm{y}}^{-A}\,\dy 
\leq  C  \int_{\ball^{4R}}\snorm{y}^{-3}\snorm{y}^{-A}\,\dy 
\leq  C  \snorm{x}^{-A}.
\]
Furthermore, H\"older's inequality with $q\in(1,\frac{5}{3})$ and $q'=q/(q-1)$
implies
\[
I_3
\leq \snorm{x}^{-A} \Bp{\int_\torus\int_{\ball_{R,4R}}1 \,\dy\ds}^{1/q'}
\norm{\fundsolvelpp}_{q}
\leq  C  \snorm{x}^{-A} \snorm{x}^{3-\frac{3}{q}}
\]
in virtue of \eqref{est:tpfundsol_ComplSummability}.
We now choose $q\in(1,\frac{5}{3})$ so small that $-A+3-\frac{3}{q}<-3$. 
Collecting these estimates, we obtain \eqref{est:ConvFundsolpp}. 
A proof of \eqref{est:ConvFundsolpp_Grad} can be given in a similar way. 
\end{proof}

The next lemma can be used to conclude asymptotic expansions in the 
linear case, where the velocity field is given by $\uvel=\fundsolvel\ast f$.

\begin{lem}\label{lem:LeadingTerm}
Let $\rey\neq 0$ and $f\in\CRci(\torus\times\R^3)$
with $\supp f\subset\torus\times\ball_{R_0}$. 
Let $\snorm{\alpha}\leq 1$.
Then
\begin{align}
\snorml{\D_x^\alpha\fundsolvelss\ast \proj f(x)}
&\leq C \bb{\np{1+\snorm{x}}\bp{1+\wakefct{\rey x}}}^{-1-\snorm{\alpha}/2}, 
\label{est:LeadingTermss}
\\
\snorml{\D_x^\alpha\fundsolvelpp\ast \projcompl f(t,x)}
&\leq C \np{1+\snorm{x}}^{-3-\snorm{\alpha}}, 
\label{est:LeadingTermpp}
\end{align}
and for $\snorm{x}\geq 2R_0$ we have
\begin{align}
&\snormL{\D^\alpha_x\fundsolvelss\ast\proj f(x)
-\D^\alpha_x\fundsolvelss(x) \cdot\int_{\R^3} \proj f(y)\,\dy}
\leq C\bb{\snorm{x}\np{1+\wakefct{\rey x}}}^{-3/2-\snorm{\alpha}/2},
\label{est:LeadingTermDiffss}
\\
&\snormL{\D^\alpha_x\fundsolvelpp\ast\projcompl f(t,x)
-\Bp{\D^\alpha_x\fundsolvelpp(\cdot,x)\ast_\torus \int_{\R^3} \projcompl f(\cdot,y)\,\dy}(t)}
\leq C\snorm{x}^{-4-\snorm{\alpha}}.
\label{est:LeadingTermDiffpp}
\end{align}
\end{lem}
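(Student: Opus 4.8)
\emph{Proof plan.} All four bounds follow by combining the estimates collected above with elementary mean-value arguments, exploiting that $\proj f$ and $\projcompl f$ are bounded and supported in $\torus\times\ball_{R_0}$. For \eqref{est:LeadingTermss} and \eqref{est:LeadingTermpp}, the plan is to observe that $\proj f$ satisfies $\snorm{\proj f(x)}\le M\np{1+\snorm{x}}^{-A}\np{1+\wakefct{x}}^{-B}$ with $M=M(f,A,B)$ for \emph{every} $A\ge 2$, $B\ge 0$ (take e.g.\ $A=B=4$), and likewise $\snorm{\projcompl f(t,x)}\le M\np{1+\snorm{x}}^{-A}$ for every $A>0$ (take e.g.\ $A=5$). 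Then \eqref{est:LeadingTermss} is Theorem~\ref{thm:ConvFundsolss}(i) when $\alpha=0$, while for $\snorm{\alpha}=1$ one writes $\D_x^\alpha\fundsolvelss\ast\proj f=(\D^\alpha\fundsolvelss)\ast\proj f$, bounds $\snorm{\D^\alpha\fundsolvelss}\le\snorm{\grad\fundsolvelss}$ pointwise, and applies Theorem~\ref{thm:ConvFundsolss}(ii). Analogously, \eqref{est:LeadingTermpp} follows from \eqref{est:ConvFundsolpp} when $\alpha=0$ and, after moving the derivative onto $\fundsolvelpp$, from \eqref{est:ConvFundsolpp_Grad} when $\snorm{\alpha}=1$ (note $\min\set{5,4}=4$).

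For \eqref{est:LeadingTermDiffss}, using $\supp\proj f\subset\ball_{R_0}$ I would rewrite, for $\snorm{x}\ge 2R_0$,
\begin{align*}
&\D^\alpha_x\fundsolvelss\ast\proj f(x) - \D^\alpha_x\fundsolvelss(x)\cdot\int_{\R^3}\proj f(y)\,\dy \\
&\qquad = \int_{\ball_{R_0}}\bb{\D^\alpha\fundsolvelss(x-y)-\D^\alpha\fundsolvelss(x)}\proj f(y)\,\dy .
\end{align*}
On the segment $[x-y,x]$ with $\snorm{y}\le R_0$ one has $\snorm{z}\ge\snorm{x}/2\ge R_0$, so the origin is avoided and the mean value theorem gives $\snorm{\D^\alpha\fundsolvelss(x-y)-\D^\alpha\fundsolvelss(x)}\le\snorm{y}\sup_{z\in[x-y,x]}\snorm{\grad\D^\alpha\fundsolvelss(z)}$. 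Inserting \eqref{est:fundsolss_Decay} with $\snorm{\beta}=\snorm{\alpha}+1$ produces the factor $\bb{\snorm{z}\np{1+\wakefct{\rey z}}}^{-3/2-\snorm{\alpha}/2}$, and the one delicate point is to replace $z$ by $x$ in it. Since $\wakefct{\cdot}$ is $2$-Lipschitz, $\snorm{\wakefct{\rey z}-\wakefct{\rey x}}\le 2\snorm{\rey}R_0$ along the segment, and a short case distinction according to whether $\wakefct{\rey x}$ is large or small relative to $\snorm{\rey}R_0$ yields $1+\wakefct{\rey z}\ge c(\rey,R_0)\np{1+\wakefct{\rey x}}$. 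Combined with $\snorm{z}\ge\snorm{x}/2$ and $\int_{\ball_{R_0}}\snorm{y}\,\snorm{\proj f(y)}\,\dy<\infty$, this gives \eqref{est:LeadingTermDiffss}.

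For \eqref{est:LeadingTermDiffpp}, the same splitting gives
\begin{align*}
&\D^\alpha_x\fundsolvelpp\ast\projcompl f(t,x) - \Bp{\D^\alpha_x\fundsolvelpp(\cdot,x)\ast_\torus\int_{\R^3}\projcompl f(\cdot,y)\,\dy}(t) \\
&\qquad = \int_\torus\int_{\ball_{R_0}}\bb{\D^\alpha\fundsolvelpp(t-s,x-y)-\D^\alpha\fundsolvelpp(t-s,x)}\projcompl f(s,y)\,\dy\,\ds .
\end{align*}
Because the pointwise bound \eqref{est:tpfundsol_ComplPointwiseEst} is formulated in $\LR{r}(\torus)$-norms rather than pointwise in $t$, I would, after Fubini and the substitution $\sigma=t-s$, estimate for fixed $y$
\[
\int_\torus\snorm{\D^\alpha\fundsolvelpp(t-s,x-y)-\D^\alpha\fundsolvelpp(t-s,x)}\,\snorm{\projcompl f(s,y)}\,\ds
\le \norm{\projcompl f}_{\infty}\,\norm{\D^\alpha\fundsolvelpp(\cdot,x-y)-\D^\alpha\fundsolvelpp(\cdot,x)}_{\LR{1}(\torus)},
\]
then apply the fundamental theorem of calculus in $x$ together with \eqref{est:tpfundsol_ComplPointwiseEst} (with $r=1$ and $\snorm{\beta}=\snorm{\alpha}+1$) and $\snorm{z}\ge\snorm{x}/2$ to bound the last norm by $C\snorm{y}\,\snorm{x}^{-4-\snorm{\alpha}}$, and finally integrate $\snorm{y}$ over $\ball_{R_0}$. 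The only genuinely delicate step is the comparison of the anisotropic factors $1+\wakefct{\rey z}$ and $1+\wakefct{\rey x}$ along the segment needed for \eqref{est:LeadingTermDiffss}; everything else is bookkeeping with Theorems~\ref{thm:ConvFundsolss}--\ref{thm:ConvFundsolpp} and the pointwise estimates \eqref{est:fundsolss_Decay}, \eqref{est:tpfundsol_ComplPointwiseEst}.
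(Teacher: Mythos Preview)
Your proposal is correct and matches the paper's approach: \eqref{est:LeadingTermss}--\eqref{est:LeadingTermpp} come directly from Theorems~\ref{thm:ConvFundsolss} and~\ref{thm:ConvFundsolpp}, and \eqref{est:LeadingTermDiffss}--\eqref{est:LeadingTermDiffpp} follow from the mean value theorem combined with \eqref{est:fundsolss_Decay} and \eqref{est:tpfundsol_ComplPointwiseEst}. For the wake-function comparison you flag as delicate, the paper avoids any case distinction via the single chain $(1+2\snorm{\rey}R_0)\bp{1+\wakefct{\rey(x-\theta y)}}\ge 1+2\snorm{\rey}R_0+\wakefct{\rey(x-\theta y)}\ge 1+\wakefct{\rey x}$.
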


\begin{proof}
Estimates \eqref{est:LeadingTermss} and \eqref{est:LeadingTermpp}
directly follow from Theorem \ref{thm:ConvFundsolss} and
Theorem \ref{thm:ConvFundsolpp}.
By the mean value theorem, we further have
\[
\snormL{\D^\alpha_x\fundsolvelss\ast\proj f(x)
-\D^\alpha_x\fundsolvelss(x) \int_{\R^3} \proj f(y)\,\dy}
\leq\int_{\ball_R} \int_0^1  \snorm{y} \snorm{\grad \D^\alpha_x\fundsolvelss(x-\theta y)}
\snorm{\proj f(y)}\,\dtheta\dy.
\]
Since $\snorm{y}\leq R_0\leq\snorm{x}/2$ implies
\[
\begin{aligned}
\snorm{x-\theta y}
&\geq\snorm{x}-\theta\snorm{y}
\geq\snorm{x}/2\geq R_0,
\\
\np{1+2\snorm{\rey}R_0}\np{1+\wakefct{\rey \np{x-\theta y}}}
&\geq 1+2\snorm{\rey}R_0+\wakefct{\rey \np{x-\theta y}}
\geq 1+\wakefct{\rey x},
\end{aligned}
\]
estimate \eqref{est:fundsolss_Decay} finally leads to
\eqref{est:LeadingTermDiffss}.
Using \eqref{est:tpfundsol_ComplPointwiseEst} instead of \eqref{est:fundsolss_Decay},
we conclude \eqref{est:LeadingTermDiffpp} in the same way.
\end{proof}

The following auxiliary result treats convolutions of functions 
with anisotropic decay.

\begin{lem}\label{lem:ConvEst_WakeFct_Two}
Let $A\in(-2,2]$, $B\in(1,2]$.
Then there exists $C=C\np{A,B}>0$ such that 
for all $x\in\R^3\setminus\set{0}$ it holds
\[
\begin{aligned}
\int_{\R^3}&\bb{\np{1+\snorm{x-y}}\np{1+\wakefct{x-y}}}^{-2}
\np{1+\snorm{y}}^{-A}\np{1+\wakefct{y}}^{-B}\,\dy \\
&\qquad\qquad\leq C\np{1+\snorm{x}}^{-A}\np{1+\wakefct{x}}^{-B}\max\setL{1,\log\Bp{\frac{\snorm{x}}{1+\wakefct{x}}}}.
\end{aligned}
\]
\end{lem}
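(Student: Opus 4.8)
The plan is to estimate the integral by splitting $\R^3$ into regions according to the relative size of $\snorm{y}$ and $\snorm{x}$, exploiting that the kernel $\bb{(1+\snorm{x-y})(1+\wakefct{x-y})}^{-2}$ is concentrated near $y=x$ (in the anisotropic sense, near the "wake" ray $\{x_1>0,\ x_2=x_3=0\}$), while the second factor $(1+\snorm{y})^{-A}(1+\wakefct{y})^{-B}$ is the slowly varying "target" weight. The standard strategy for such anisotropically-weighted convolution estimates (as in Farwig \cite{Farwig_habil} and Kra\v cmar--Novotn\'y--Pokorn\'y \cite{KracmarNovotnyPokorny2001}) is to decompose
$\R^3 = \Omega_1 \cup \Omega_2 \cup \Omega_3$ with $\Omega_1 = \{\snorm{y} \le \snorm{x}/2\}$, $\Omega_2 = \{\snorm{y} \ge 2\snorm{x}\}$, and $\Omega_3 = \{\snorm{x}/2 \le \snorm{y} \le 2\snorm{x}\}$, and to treat the "near-wake" part of $\Omega_3$ separately since that is where both the logarithm and the sharpness of the anisotropic exponents come from.

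First, on $\Omega_1$ one has $\snorm{x-y}\sim\snorm{x}$ and, crucially, $\wakefct{x-y}\gtrsim\wakefct{x} - C\snorm{y}$ is not immediately comparable to $\wakefct{x}$, so one instead just uses $1+\wakefct{x-y}\ge 1$ there, bounding the kernel by $C(1+\snorm{x})^{-2}$; since $A>-2$ and $B>1$ the remaining integral $\int_{\snorm{y}\le\snorm{x}/2}(1+\snorm{y})^{-A}(1+\wakefct{y})^{-B}\,\dy$ grows at most like $(1+\snorm{x})^{2-A}\log\snorm{x}$ (the $\wakefct{y}^{-B}$ weight with $B>1$ costs only a logarithm over paraboloidal wake regions), which after dividing by $(1+\snorm{x})^2$ yields the claimed bound, possibly with the $\log$. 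On $\Omega_2$ one has $\snorm{x-y}\sim\snorm{y}$ and $\wakefct{x-y}\gtrsim\wakefct{y}$ (up to constants, since $\snorm{x}\le\snorm{y}/2$), so the kernel dominates the target weight and the integral $\int_{\snorm{y}\ge 2\snorm{x}}\bb{(1+\snorm{y})(1+\wakefct{y})}^{-2-\min\{A,0\}}\cdots$ converges to something bounded by $C(1+\snorm{x})^{-A}(1+\wakefct{x})^{-B}$ because $2\ge A$ and $2\ge B$ make the exponents large enough. The region $\Omega_3$ is where we use that the target weight is essentially frozen: $(1+\snorm{y})^{-A}\sim(1+\snorm{x})^{-A}$ and, on the subset where $\wakefct{y}\ge\wakefct{x}/2$, also $(1+\wakefct{y})^{-B}\sim(1+\wakefct{x})^{-B}$, so we pull these constants out and are left with $\int_{\Omega_3}\bb{(1+\snorm{x-y})(1+\wakefct{x-y})}^{-2}\,\dy$, which over $\snorm{x-y}\lesssim\snorm{x}$ is $\lesssim\log\snorm{x}$ — more precisely, integrating in cylindrical coordinates around the wake axis shows this is bounded by $C\log\bp{\snorm{x}/(1+\wakefct{x})}$ when that ratio exceeds $1$, and by $C$ otherwise, which is exactly the stated $\max$.

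The delicate sub-case, and the \textbf{main obstacle}, is the part of $\Omega_3$ near the wake where $\wakefct{y}\le\wakefct{x}/2$, i.e. $y$ lies deeper inside the paraboloidal wake than $x$ does. There $(1+\wakefct{y})^{-B}$ is \emph{larger} than $(1+\wakefct{x})^{-B}$, so one cannot simply freeze it; instead one must show $\int \bb{(1+\snorm{x-y})(1+\wakefct{x-y})}^{-2}(1+\wakefct{y})^{-B}\,\dy$ over this set is controlled by $(1+\wakefct{x})^{-B}$ times the logarithm. This is where $B>1$ is used sharply: one changes variables to $(z_1,\rho) = (y_1, (y_2^2+y_3^2)^{1/2})$ and notes $\wakefct{y}\approx \rho^2/\snorm{y}$ in the wake, so $(1+\wakefct{y})^{-B}$ is integrable in $\rho$ precisely because $2B-1>1$; combined with the decay of the kernel in the longitudinal variable, the $z_1$-integral produces the logarithm and the $\rho$-integral the factor $(1+\wakefct{x})^{-B}$. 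I would handle this by adapting the estimate \cite[Lemma 3.2 or the integral lemmas in Section 3]{KracmarNovotnyPokorny2001}; indeed the whole statement is of the type proved there, and the cleanest route may simply be to reduce the left-hand side to an instance of their convolution bound (noting $2 + \min\{1,2\} = 3$ is the borderline case producing the logarithm), so that the proof amounts to verifying the hypotheses $A\in(-2,2]$, $B\in(1,2]$ match their admissible ranges and reading off their conclusion. Either way, the quantitative logarithmic factor in the precise form $\max\{1,\log(\snorm{x}/(1+\wakefct{x}))\}$ comes from carefully tracking the longitudinal integration length along the wake, which is $\sim\snorm{x}$ far from the wake but shrinks to $\sim 1+\wakefct{x}$ close to it.
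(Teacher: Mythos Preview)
The paper's own proof is a one-line reference: ``This is a consequence of the calculations in \cite[Section 2]{KracmarNovotnyPokorny2001}.'' Your closing suggestion --- reduce to an instance of the Kra\v cmar--Novotn\'y--Pokorn\'y convolution estimates and read off the conclusion --- is therefore exactly what the paper does, and is the cleanest route.

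Your detailed sketch, however, has a real gap in the treatment of $\Omega_1=\{\snorm{y}\le\snorm{x}/2\}$. Discarding the wake factor in the kernel and bounding it by $C(1+\snorm{x})^{-2}$ gives, after integrating the remaining weight, only
\[
C(1+\snorm{x})^{-2}\int_{\snorm{y}\le\snorm{x}/2}(1+\snorm{y})^{-A}(1+\wakefct{y})^{-B}\,\dy
\ \le\ C(1+\snorm{x})^{-A}\quad(\text{for }A<2),
\]
and this is \emph{not} controlled by the right-hand side when $\wakefct{x}$ is large (say $\wakefct{x}\sim\snorm{x}$, i.e.\ $x$ in the downstream direction), since then the target bound is $\sim(1+\snorm{x})^{-A-B}$. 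The point is that on $\Omega_1$ one cannot throw away $(1+\wakefct{x-y})^{-2}$: from $\wakefct{x-y}\ge\wakefct{x}-2\snorm{y}$ one sees that $\wakefct{x-y}\gtrsim\wakefct{x}$ on the sub-region $\snorm{y}\le\wakefct{x}/4$, and the complementary shell $\wakefct{x}/4<\snorm{y}\le\snorm{x}/2$ needs its own treatment (this is precisely where the Section~2 computations in \cite{KracmarNovotnyPokorny2001} do the work). Your $\Omega_2$ and main $\Omega_3$ arguments are fine in outline, and your identification of the near-wake sub-region of $\Omega_3$ as the source of the logarithmic factor is correct.
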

\begin{proof}
This is a consequence of the calculations in \cite[Section 2]{KracmarNovotnyPokorny2001}.
\end{proof}

\section{Main results}
\label{sec:ProofMainResults}

We consider weak solutions to \eqref{sys:NStp_Decay}
in the following sense.

\begin{defn}\label{def:WeakSolution_NStp}
Let $f\in\LRloc{1}(\torus\times\R^3)^3$.
A function $\uvel\in\LRloc{1}(\torus\times\R^3)^3$ 
is called \emph{weak solution} to \eqref{sys:NStp_Decay}
if
\begin{enumerate}[i.]
\item
$\uvel\in\LR{2}(\torus;\DSRNsigma{1}{2}(\R^3))$,
\label{item:WeakSolution_NStp_L2}
\item
$\projcompl\uvel\in\LR{\infty}(\torus;\LR{2}(\R^3))^3$,
\label{item:WeakSolution_NStp_LInfty}
\item 
the identity
\[
\int_{\torus\times\R^3}\bb{-\uvel\cdot\partial_t\varphi
+\grad\uvel:\grad\varphi
-\rey\partial_1\uvel\cdot\varphi
+\np{\uvel\cdot\grad\uvel}\cdot\varphi}\,\dtx
=\int_{\torus\times\R^3} f\cdot\varphi\,\dtx
\]
holds for all test functions $\varphi\in\CRcisigma(\torus\times\R^3)$.
\label{item:WeakSolution_NStp_WeakFormulation}
\end{enumerate}
\end{defn}

\begin{rem}
The existence of a weak solution with the above properties 
has been shown in \cite[Theorem 6.3.1]{Kyed_habil}
for any $f\in\LR{2}(\torus;\DSRN{-1}{2}(\R^3))^3$.
Therefore, this class seems to be a natural outset for further investigation.
Nevertheless, at first glance, instead of \ref{item:WeakSolution_NStp_LInfty}
one would expect the condition $\uvel\in\LR{\infty}(\torus;\LR{2}(\Omega))^3$,
which naturally appears for weak solutions to the Navier--Stokes initial-value problem.
However, this property cannot be expected for general time-periodic data $f$. 
As was shown by \textsc{Kyed} \cite[Theorem 5.2.4]{Kyed_habil},
for smooth data $f\in\CRci(\torus\times\R^3)^3$ 
one has $\uvel\in\LR{\infty}(\torus;\LR{2}(\R^3))^3$ 
if and only if $\int_{\torus\times\R^3} f\,\dxt=0$.
An analogous property
was established by \textsc{Finn} \cite{Finn1960}
for the corresponding steady-state problem.
\end{rem}

As our main result, we establish the following asymptotic expansions.

\begin{thm}\label{thm:VelocityExpansion}
Let $\rey\neq 0$ and $f\in\CRci(\torus\times\R^3)^3$, and let $\uvel$ be a 
weak time-periodic solution to \eqref{sys:NStp_Decay} in the sense of 
Definition \ref{def:WeakSolution_NStp}, which satisfies
\begin{equation}
\label{el:VelocityAdditionalIntegrability}
\exists r\in(5,\infty): \quad \projcompl\uvel\in\LR{r}(\torus\times\R^3)^3.
\end{equation}
Then
\begin{align}
\proj\uvel(x)
&=\fundsolvelss(x) \cdot \int_{\Omega} \proj f(y)\,\dy + \restterm_0(x), 
\label{eq:VelocityExpansion_ss}\\
\projcompl\uvel(t,x)
&=\fundsolvelpp(\cdot,x)\ast_\torus\int_{\Omega} \projcompl f(\cdot,y)\,\dy + \restterm_\perp(t,x)
\label{eq:VelocityExpansion_pp}
\end{align}
such that there exists $C>0$ such that for all $t\in\torus$ and $\snorm{x}\geq 4$ it holds
\begin{align}
\snorm{\restterm_0(x)}
&\leq C \bb{\snorm{x}\bp{1+\wakefct{\rey x}}}^{-3/2}\log\snorm{x},
\label{est:VelocityExpansion_resttermss_fct}
\\
\snorm{\grad\restterm_0(x)}
&\leq C \bb{\snorm{x}\bp{1+\wakefct{\rey x}}}^{-2}\max\setL{1,\log\Bp{\frac{\snorm{x}}{1+\wakefct{\rey x}}}},
\label{est:VelocityExpansion_resttermss_grad}
\\
\snorm{\restterm_\perp(t,x)}
&\leq C\snorm{x}^{-4},
\label{est:VelocityExpansion_resttermpp_fct}
\\
\snorm{\grad\restterm_\perp(t,x)}
&\leq C\snorm{x}^{-9/2}\np{1+\wakefct{\rey x}}^{-1/2}.
\label{est:VelocityExpansion_resttermpp_grad}
\end{align}
In particular, 
\begin{equation}
\uvel(t,x)
=\fundsolvelss(x) \cdot \int_\torus\int_{\Omega} f(t,y)\,\dy\dt + \restterm(t,x) 
\label{eq:VelocityExpansion_tp}\\
\end{equation}
with 
\begin{align}
\snorm{\restterm(t,x)}
&\leq C \bb{\snorm{x}\bp{1+\wakefct{\rey x}}}^{-3/2}\log\snorm{x},
\label{est:VelocityExpansion_resttermtp_fct}
\\
\snorm{\grad\restterm(t,x)}
&\leq C \bb{\snorm{x}\bp{1+\wakefct{\rey x}}}^{-2}
\max\setL{1,\log\Bp{\frac{\snorm{x}}{1+\wakefct{\rey x}}}}.
\label{est:VelocityExpansion_resttermtp_grad}
\end{align}
\end{thm}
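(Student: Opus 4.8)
The plan is to treat $\uvel$ as a solution of the time-periodic Oseen system \eqref{sys:NSlintp_TPFS} with right-hand side $f-\uvel\cdot\grad\uvel$, represent it through the fundamental solution $\fundsolvel$, and bootstrap. Since $\Div\uvel=0$ we have $\uvel\cdot\grad\uvel=\Div\np{\uvel\otimes\uvel}$, and the hypotheses $\uvel\in\LR{2}(\torus;\DSRNsigma{1}{2}(\R^3))\embeds\LR{2}(\torus;\LR{6}(\R^3))$ together with \eqref{el:VelocityAdditionalIntegrability} put $\uvel\otimes\uvel$ in Lebesgue spaces over $\grp$ for which $\fundsolvel\ast_\grp\Div\np{\uvel\otimes\uvel}$ is well defined; comparing this convolution with $\uvel$ by uniqueness for the linear time-periodic Oseen problem (equivalently, in Fourier variables on $\grp$) yields $\uvel=\fundsolvel\ast_\grp f-\fundsolvel\ast_\grp\Div\np{\uvel\otimes\uvel}$. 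Applying $\proj$ and $\projcompl$, using the splitting \eqref{eq:tpfundsol_decompVel} together with $\proj\bp{\fundsolvelpp\ast_\grp g}=0$ and $\projcompl\bp{(\fundsolvelss\otimes\onedist_{\torus})\ast_\grp g}=0$, and integrating by parts, I obtain
\[
\proj\uvel=\fundsolvelss\ast_{\R^3}\proj f-\grad\fundsolvelss\ast_{\R^3}\proj\np{\uvel\otimes\uvel},
\qquad
\projcompl\uvel=\fundsolvelpp\ast_\grp\projcompl f-\grad\fundsolvelpp\ast_\grp\projcompl\np{\uvel\otimes\uvel},
\]
the notation $\grad\fundsolvelss\ast\proj\np{\uvel\otimes\uvel}$ being shorthand for the sum $\partial_k\fundsolvelssjl\ast\proj\np{\uvel\otimes\uvel}_{\ell k}$.

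Next I would run a decay bootstrap. Decomposing $\uvel\otimes\uvel$ into the four products of $\proj\uvel$ and $\projcompl\uvel$, one has $\proj\np{\uvel\otimes\uvel}=\proj\uvel\otimes\proj\uvel+\proj\np{\projcompl\uvel\otimes\projcompl\uvel}$, while $\projcompl\np{\uvel\otimes\uvel}$ collects the purely periodic parts of the remaining products and hence inherits the faster decay of the factor $\projcompl\uvel$. Starting from the decay already available from \cite{GaldiKyed_TPSolNS3D_AsymptoticProfile}, say $\snorm{\proj\uvel(x)}+\snorm{\projcompl\uvel(t,x)}\le C\snorm{x}^{-3/2+\epsilon}$ with the analogous bounds for the gradients coming from interior regularity for the Oseen system, I would feed the current rates into the two identities above, improving them by Theorem \ref{thm:ConvFundsolss}(4) and Theorem \ref{thm:ConvFundsolpp} for the convolutions and by Lemma \ref{lem:ConvEst_WakeFct_Two} when multiplying out the anisotropic rates appearing in $\proj\uvel\otimes\proj\uvel$ and its gradient. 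Since $\fundsolvelpp$ and $\grad\fundsolvelpp$ decay isotropically like $\snorm{x}^{-3}$ and $\snorm{x}^{-4}$ (Theorem \ref{thm:tpfundsol}), after finitely many iterations one reaches
\[
\snorm{\proj\uvel(x)}\le C\bb{\snorm{x}\np{1+\wakefct{\rey x}}}^{-1},\quad
\snorm{\grad\proj\uvel(x)}\le C\bb{\snorm{x}\np{1+\wakefct{\rey x}}}^{-3/2},\quad
\snorm{\projcompl\uvel(t,x)}\le C\snorm{x}^{-3},\quad
\snorm{\grad\projcompl\uvel(t,x)}\le C\snorm{x}^{-4}.
\]

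Finally I would extract the leading terms. With these rates the dominant part $\proj\uvel\otimes\proj\uvel$ of $\proj\np{\uvel\otimes\uvel}$ satisfies $\snorm{\proj\uvel\otimes\proj\uvel(x)}\le C\bb{\np{1+\snorm{x}}\np{1+\wakefct{\rey x}}}^{-2}$, which is exactly the borderline case $A+\min\set{1,B}=3$, $A+B\ge 7/2$ of Theorem \ref{thm:ConvFundsolss}(3), so $\grad\fundsolvelss\ast\proj\np{\uvel\otimes\uvel}$ is bounded by $C\bb{\snorm{x}\np{1+\wakefct{\rey x}}}^{-3/2}\log\snorm{x}$; together with \eqref{est:LeadingTermDiffss} for $\snorm{\alpha}=0$ this gives \eqref{eq:VelocityExpansion_ss} and \eqref{est:VelocityExpansion_resttermss_fct}. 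For $\grad\restterm_0$ one keeps the extra derivative on $\fundsolvelss$, splits $\proj\np{\uvel\otimes\uvel}$ into a smooth near-origin part (handled by interior regularity) and an exterior part to which Lemma \ref{lem:ConvEst_WakeFct_Two} applies with $A=B=2$, yielding \eqref{est:VelocityExpansion_resttermss_grad} after adding \eqref{est:LeadingTermDiffss} with $\snorm{\alpha}=1$. On the purely periodic side, \eqref{est:LeadingTermDiffpp} isolates the leading term $\fundsolvelpp(\cdot,x)\ast_\torus\int_{\R^3}\projcompl f(\cdot,y)\,\dy$, while $\grad\fundsolvelpp\ast_\grp\projcompl\np{\uvel\otimes\uvel}$, with $\projcompl\np{\uvel\otimes\uvel}$ decaying like $\bb{\snorm{x}\np{1+\wakefct{\rey x}}}^{-1}\snorm{x}^{-3}$, is $O(\snorm{x}^{-4})$ by Theorem \ref{thm:ConvFundsolpp}, which gives \eqref{eq:VelocityExpansion_pp} and \eqref{est:VelocityExpansion_resttermpp_fct}. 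For the sharper anisotropic bound \eqref{est:VelocityExpansion_resttermpp_grad} one additionally uses that $\int_{\R^3}\projcompl\np{\uvel\cdot\grad\uvel}(t,y)\,\dy=0$, being an exact divergence: this cancels the $O(\snorm{x}^{-4})$ contribution of $\grad\fundsolvelpp\ast_\grp\projcompl\np{\uvel\cdot\grad\uvel}$, and the next order, which carries the wake decay of the factor $\proj\uvel$ in $\projcompl\np{\uvel\cdot\grad\uvel}$, produces $\snorm{x}^{-9/2}\np{1+\wakefct{\rey x}}^{-1/2}$; adding \eqref{est:LeadingTermDiffpp} with $\snorm{\alpha}=1$ gives \eqref{est:VelocityExpansion_resttermpp_grad}. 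The expansion \eqref{eq:VelocityExpansion_tp} and the estimates \eqref{est:VelocityExpansion_resttermtp_fct}--\eqref{est:VelocityExpansion_resttermtp_grad} then follow by adding \eqref{eq:VelocityExpansion_ss} and \eqref{eq:VelocityExpansion_pp}, since $\int_\torus\int_{\R^3}f=\int_{\R^3}\proj f$ and, by \eqref{est:tpfundsol_ComplPointwiseEst}, the purely periodic leading term and $\restterm_\perp$ are $O(\snorm{x}^{-3})$, hence dominated by $\restterm_0$.

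The main obstacle I expect is the anisotropic bookkeeping throughout the bootstrap and, above all, the refinement \eqref{est:VelocityExpansion_resttermpp_grad}: the isotropic estimate of Theorem \ref{thm:ConvFundsolpp} yields only $O(\snorm{x}^{-4})$ for $\grad\restterm_\perp$, so reaching $\snorm{x}^{-9/2}\np{1+\wakefct{\rey x}}^{-1/2}$ forces one to combine the vanishing spatial mean of the nonlinear forcing with an anisotropic convolution estimate for $\grad\fundsolvelpp$ that tracks the wake structure imported through the steady part $\proj\uvel$. Besides this, the clean justification of the integral representation for weak solutions under the single extra hypothesis \eqref{el:VelocityAdditionalIntegrability} --- a uniqueness statement for the linear time-periodic Oseen system --- is the other point requiring care.
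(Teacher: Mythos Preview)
Your overall plan coincides with the paper's: obtain the representation formulas via the fundamental solution (the paper packages this as Proposition~\ref{prop:RepresentationVelocity}, using Lemma~\ref{lem:NSlintp_Uniqueness} for uniqueness and Lemma~\ref{lem:RegularityNS} for the required integrability), bootstrap the pointwise decay of $\proj\uvel$, $\projcompl\uvel$ and their gradients (this is stated and proved separately as Theorem~\ref{thm:VelocityDecay}), and then read off the remainder estimates from Theorems~\ref{thm:ConvFundsolss}, \ref{thm:ConvFundsolpp} and Lemma~\ref{lem:LeadingTerm}. Your treatment of $\restterm_0$, $\grad\restterm_0$ and $\restterm_\perp$ matches the paper almost verbatim, including the cutoff splitting for $\grad\restterm_0$ and the appeal to Lemma~\ref{lem:ConvEst_WakeFct_Two}.

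The one substantive divergence is your handling of \eqref{est:VelocityExpansion_resttermpp_grad}. You propose to exploit $\int_{\R^3}\projcompl\np{\uvel\cdot\grad\uvel}(t,y)\,\dy=0$ to cancel an $O(\snorm{x}^{-4})$ leading contribution. The paper does \emph{not} use this cancellation at all. Instead it works with the representation \eqref{eq:RepresentationVelocity_uGradu_pp} (the form containing $\uvel\cdot\grad\uvel$, not $\uvel\otimes\uvel$), inserts the pointwise decay from Theorem~\ref{thm:VelocityDecay}, which gives
\[
\snorml{\vvel\cdot\grad\wvel+\wvel\cdot\grad\vvel+\projcompl\nb{\wvel\cdot\grad\wvel}}(t,x)\le C\np{1+\snorm{x}}^{-9/2}\np{1+\wakefct{\rey x}}^{-3/2},
\]
and then estimates $\partial_j\fundsolvelpp\ast(\cdot)$ by a direct near/far splitting at radius $1$ around $x$, using $\norm{\partial_j\fundsolvelpp}_1<\infty$ on $\ball_1(x)$ and the pointwise bound $\snorm{\partial_j\fundsolvelpp}\le C\snorm{\cdot}^{-4}$ from \eqref{est:tpfundsol_ComplPointwiseEst} on $\ball^1(x)$. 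So in the paper the anisotropy in \eqref{est:VelocityExpansion_resttermpp_grad} is inherited from the anisotropic decay of the \emph{nonlinearity} (via the factor $\grad\vvel$), not produced by a mean-zero subtraction.

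A smaller remark on the bootstrap: you write that gradient bounds ``come from interior regularity for the Oseen system''. The paper does not proceed that way; it starts the gradient bootstrap from the $\LR{p}$ information in Lemma~\ref{lem:RegularityNS} combined with the representation \eqref{eq:RepresentationVelocity_uGradu}--\eqref{eq:RepresentationVelocity_uGradu_pp} (note: the form with $\uvel\cdot\grad\uvel$, which is why both representations are recorded in Proposition~\ref{prop:RepresentationVelocity}) and iterates several times, alternating between the steady and purely periodic parts. Making this explicit will save you from having to justify a separate interior-regularity step.
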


\begin{rem}\label{rem:AsymptoticExpansion_SerrinCondition}
As explained in \cite{GaldiKyed_TPSolNS3D_AsymptoticProfile},
assumption \eqref{el:VelocityAdditionalIntegrability}
merely appears for technical reasons.
It ensures additional local regularity
but does not improve spatial decay of the solution.
\end{rem}

One main observation is that the asymptotic behavior of $\uvel$ and $\grad\uvel$
for $\snorm{x}\to\infty$ is governed by 
the time-periodic Oseen fundamental solution $\fundsolvel$.
In particular, the purely periodic part $\projcompl\uvel$ decays faster
than the steady-state part $\proj\uvel$. 
As a direct consequence of Theorem \ref{thm:VelocityExpansion},
we obtain the following pointwise estimates,
which we shall derive as intermediate results
on the way to a proof of Theorem \ref{thm:VelocityExpansion}.

\begin{thm}\label{thm:VelocityDecay}
Under the assumptions of Theorem \ref{thm:VelocityDecay}
there is $C>0$
such that for all $t\in\torus$ and $x\in\R^3$
the function $\uvel$ satisfies
\begin{align}
\snorm{\proj\uvel(x)}
&\leq C\bb{\bp{1+\snorm{x}}\bp{1+\wakefct{\rey x}}}^{-1}, 
\label{est:VelocityDecay_fct_ss}
\\
\snorm{\grad\proj\uvel(x)}
&\leq C\bb{\bp{1+\snorm{x}}\bp{1+\wakefct{\rey x}}}^{-\frac{3}{2}},
\label{est:VelocityDecay_grad_ss}
\\
\snorm{\projcompl\uvel(t,x)}
&\leq C\bp{1+\snorm{x}}^{-3},
\label{est:VelocityDecay_fct_pp}
\\
\snorm{\grad\projcompl\uvel(t,x)}
&\leq C\bp{1+\snorm{x}}^{-4}.
\label{est:VelocityDecay_grad_pp}
\end{align}
\end{thm}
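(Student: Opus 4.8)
The plan is to bootstrap the pointwise decay of $\uvel$ starting from a volume‑potential representation. Since $\Div\uvel=0$, the convective term can be written as $\Div(\uvel\otimes\uvel)$, so \eqref{sys:NStp_Decay} becomes a time‑periodic Oseen system with right‑hand side $f-\Div(\uvel\otimes\uvel)$. Using assumption \eqref{el:VelocityAdditionalIntegrability}, the weak‑solution properties of $\uvel$ and interior regularity, one first shows --- as in \cite{GaldiKyed_TPSolNS3D_AsymptoticProfile} --- that $\uvel$ is smooth, that $\uvel\otimes\uvel$ is a tempered distribution on $\grp$ to which $\fundsolvel$ applies, and that
\[
\uvel=\fundsolvel\ast_\grp\bp{f-\Div(\uvel\otimes\uvel)}.
\]
Applying $\proj$ and $\projcompl$, using \eqref{eq:tpfundsol_decompVel}, and shifting the divergence onto the kernel, this decomposes (with the obvious index contractions) into $\proj\uvel=\fundsolvelss\ast\proj f-\grad\fundsolvelss\ast\proj(\uvel\otimes\uvel)$ and $\projcompl\uvel=\fundsolvelpp\ast_\grp\projcompl f-\grad\fundsolvelpp\ast_\grp\projcompl(\uvel\otimes\uvel)$, with analogous identities for $\grad\proj\uvel$ and $\grad\projcompl\uvel$, where the extra derivative is placed on $\uvel\otimes\uvel$ for the periodic part and on the kernel (producing $\grad^2\fundsolvelss$) for the steady part. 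Combining \eqref{eq:VelocityExpansion_Intro} with \eqref{est:fundsolss_Decay} and the elementary bound $1+\wakefct{\rey x}\le C(1+\snorm{x})$ supplies the starting estimates $\snorm{\proj\uvel(x)}\le C(1+\snorm{x})^{-1}(1+\wakefct{\rey x})^{-1/2+\epsilon}$ (any small fixed $\epsilon>0$), $\snorm{\projcompl\uvel(t,x)}\le C(1+\snorm{x})^{-1}$ and $\snorm{\grad\uvel(t,x)}\le C(1+\snorm{x})^{-1}$. As all quantities are continuous, the asserted bounds are trivial for bounded $\snorm{x}$, so only $\snorm{x}\to\infty$ needs attention.

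I would first treat the purely periodic part, which decays faster because $\projcompl(\uvel\otimes\uvel)=\proj\uvel\otimes\projcompl\uvel+\projcompl\uvel\otimes\proj\uvel+\projcompl(\projcompl\uvel\otimes\projcompl\uvel)$, so every term carries a factor $\projcompl\uvel$. If $\snorm{\projcompl\uvel(t,x)}\le C(1+\snorm{x})^{-\nu}$, this together with $\snorm{\proj\uvel(x)}\le C(1+\snorm{x})^{-1}$ gives $\snorm{\projcompl(\uvel\otimes\uvel)(t,x)}\le C(1+\snorm{x})^{-1-\nu}$, and Theorem \ref{thm:ConvFundsolpp} --- via \eqref{est:ConvFundsolpp_Grad}, applicable since the nonlinear term is in divergence form --- together with \eqref{est:LeadingTermpp} for the linear term yields $\snorm{\projcompl\uvel(t,x)}\le C(1+\snorm{x})^{-\min\set{1+\nu,3}}$. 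Starting from $\nu=1$ this closes after two steps and gives \eqref{est:VelocityDecay_fct_pp}. Differentiating the representation and keeping one derivative on $\uvel\otimes\uvel$ (so that only $\grad\fundsolvelpp$ occurs), the same scheme --- using the base bounds for $\proj\uvel$ and $\grad\proj\uvel$ --- improves the exponent $\mu$ of $\grad\projcompl\uvel$ according to $\mu\mapsto\min\set{1+\mu,4}$, and iterating from $\mu=1$ yields \eqref{est:VelocityDecay_grad_pp}, the ceiling $\snorm{\grad\fundsolvelpp\ast_\grp\projcompl f}\le C(1+\snorm{x})^{-4}$ coming from \eqref{est:LeadingTermpp}.

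Next I would treat the steady‑state part, tracking anisotropic decay $\snorm{\proj\uvel(x)}\le C(1+\snorm{x})^{-1}(1+\wakefct{\rey x})^{-b}$. Here $\proj(\uvel\otimes\uvel)=\proj\uvel\otimes\proj\uvel+\proj(\projcompl\uvel\otimes\projcompl\uvel)$, and by the previous step the last term is $O\bp{(1+\snorm{x})^{-6}}$, hence harmless; thus $\snorm{\proj(\uvel\otimes\uvel)(x)}\le C(1+\snorm{x})^{-2}(1+\wakefct{\rey x})^{-2b}$, which is of the form required by Theorem \ref{thm:ConvFundsolss}. Applying that theorem to $\grad\fundsolvelss\ast\proj(\uvel\otimes\uvel)$ and adding the linear contribution $\snorm{\fundsolvelss\ast\proj f}\le C\bb{(1+\snorm{x})(1+\wakefct{\rey x})}^{-1}$ from \eqref{est:LeadingTermss} improves $b$; a short case analysis (case (iv) then case (iii) of Theorem \ref{thm:ConvFundsolss}, where choosing $\epsilon$ small so that the initial $b=\tfrac{1}{2}-\epsilon<\tfrac{1}{2}$ lets the iteration skip past the borderline range $A+B\in[3,\tfrac{7}{2})$ not covered by the theorem) then gives, after finitely many steps, $\snorm{\proj\uvel(x)}\le C\bb{(1+\snorm{x})(1+\wakefct{\rey x})}^{-1}$, i.e.\ \eqref{est:VelocityDecay_fct_ss}. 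With this in hand, $\grad\proj\uvel$ is represented through the singular potential $\grad^2\fundsolvelss\ast\proj(\uvel\otimes\uvel)$; away from the diagonal the kernel satisfies $\snorm{\grad^2\fundsolvelss(x)}\le C\bb{(1+\snorm{x})(1+\wakefct{\rey x})}^{-2}$ by \eqref{est:fundsolss_Decay}, so Lemma \ref{lem:ConvEst_WakeFct_Two} with $A=B=2$ bounds that contribution by $C\bb{(1+\snorm{x})(1+\wakefct{\rey x})}^{-2}\max\set{1,\log\bp{\snorm{x}/(1+\wakefct{\rey x})}}$, while the near‑diagonal piece is of lower order by the cancellation of $\grad^2\fundsolvelss$ and the local smoothness of $\uvel\otimes\uvel$; since the linear term $\snorm{\grad\fundsolvelss\ast\proj f}\le C\bb{(1+\snorm{x})(1+\wakefct{\rey x})}^{-3/2}$ from \eqref{est:LeadingTermss} dominates, \eqref{est:VelocityDecay_grad_ss} follows.

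The hard part will be organizing the steady‑state iteration so that it closes exactly at the rate $\bb{(1+\snorm{x})(1+\wakefct{\rey x})}^{-1}$: one must propagate the two‑parameter anisotropic decay through the case distinctions of Theorem \ref{thm:ConvFundsolss}, ensure the initial exponent $b$ (coming from the merely isotropic remainder bound of \cite{GaldiKyed_TPSolNS3D_AsymptoticProfile}) is chosen to avoid the configuration the theorem does not cover, and verify that the fixed point is the sharp rate rather than a weaker one carrying an extra power or logarithm. A secondary, purely technical matter is the rigorous justification of the volume‑potential representation and the handling of the local singularity of $\grad^2\fundsolvelss$, which is exactly where the smoothness of $\uvel$ --- hence of $\uvel\otimes\uvel$, guaranteed by \eqref{el:VelocityAdditionalIntegrability} --- is used. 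Keeping the steady‑state and purely periodic contributions apart throughout --- so that the faster decay of $\projcompl\uvel$ is genuinely exploited and never contaminates the estimates for $\proj\uvel$ --- is the conceptual core of the argument.
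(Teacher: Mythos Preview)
Your overall strategy---bootstrap via the representation formulas of Proposition~\ref{prop:RepresentationVelocity} together with Theorems~\ref{thm:ConvFundsolss} and~\ref{thm:ConvFundsolpp}---is the same as the paper's, and your treatment of the function estimates \eqref{est:VelocityDecay_fct_ss} and \eqref{est:VelocityDecay_fct_pp} is correct (in fact one application of case~(iv) of Theorem~\ref{thm:ConvFundsolss} already yields \eqref{est:VelocityDecay_fct_ss}; the detour through case~(iii) is unnecessary).

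There is, however, a genuine gap in the gradient part. You assert $\snorm{\grad\uvel(t,x)}\le C(1+\snorm{x})^{-1}$ as a starting estimate coming from \eqref{eq:VelocityExpansion_Intro}, but that expansion concerns only $\uvel$, not $\grad\uvel$; no pointwise gradient decay is available from \cite{GaldiKyed_TPSolNS3D_AsymptoticProfile}. Without this seed your iteration for $\grad\projcompl\uvel$ cannot start (the term $\wvel\cdot\grad\vvel$ in $\projcompl(\uvel\cdot\grad\uvel)$ requires pointwise control of $\grad\vvel$), and the same circularity infects your argument for $\grad\proj\uvel$: the ``near-diagonal piece'' of $\grad^2\fundsolvelss\ast\proj(\uvel\otimes\uvel)$ is controlled by $\snorm{\proj(\uvel\otimes\uvel)(x)}+\sup_{\ball_1(x)}\snorm{\grad\proj(\uvel\otimes\uvel)}$, and the second term again needs decay of $\grad\uvel$. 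The paper closes this gap by a different mechanism: it writes $\partial_j\vvel=\partial_j\fundsolvelss\ast[\proj f-\vvel\cdot\grad\vvel-\proj(\wvel\cdot\grad\wvel)]$ (so only $\grad\fundsolvelss$, not $\grad^2\fundsolvelss$, occurs), splits the convolution into $\ball_{\snorm{x}/2}$ and its complement, and on each piece uses H\"older's inequality with the \emph{integrability} information from Lemma~\ref{lem:RegularityNS} (e.g.\ $\vvel\in\LR{3}$, $\grad\vvel\in\LR{3/2}\cap\LR{17/5}$, $\grad\fundsolvelss\in\LR{17/12}$) rather than any pointwise gradient bound. This yields the seed $\snorm{\grad\vvel(x)}\le C(1+\snorm{x})^{-1}$ and, analogously, $\snorm{\grad\wvel(t,x)}\le C(1+\snorm{x})^{-1}$, after which the bootstrap via Theorems~\ref{thm:ConvFundsolss} and~\ref{thm:ConvFundsolpp} proceeds exactly as you outline. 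In short: the missing ingredient is an $\LR{p}$--based argument to initiate the pointwise gradient iteration.
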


In order to prove these theorems, we recall the following regularity result.

\begin{lem}\label{lem:RegularityNS}
Let $\uvel$ be a weak solution as in Theorem \ref{thm:VelocityExpansion}.
Then $\uvel\in\CRi(\torus\times\R^3)^3$ and
\begin{align*}
\forall r\in(1,\infty), \,q\in(1,2): &\ \ 
\grad^2\proj\uvel\in\LR{r}(\R^3),\ 
\grad\proj\uvel\in\LR{4q/(4-q)}(\R^3), \
\proj\uvel\in\LR{2q/(2-q)}(\R^3), \\
\forall q\in(1,\infty): &\ \
\projcompl\uvel\in\LR{q}(\torus;\WSR{2}{q}(\R^3))\cap\WSR{1}{q}(\torus;\LR{q}(\R^3),
\end{align*}
and there is a pressure function $\upres\in\CRi\np{\torus\times\R^3}$ such that 
\eqref{sys:NStp_Decay} is satisfied pointwise.
\end{lem}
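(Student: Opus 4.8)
The plan is to decompose $\uvel=\proj\uvel+\projcompl\uvel$, treat the Navier--Stokes system as a perturbation of the steady-state and of the time-periodic Oseen systems, and bootstrap the integrability of the quadratic term via the linear $\LR q$-theory until enough Sobolev regularity is available to deduce $\CRi$-smoothness from the smoothness of $f$.

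First I would set $\vvel\coloneqq\proj\uvel$ and $\wvel\coloneqq\projcompl\uvel$. Applying $\proj$ and $\projcompl$ to the weak formulation of Definition~\ref{def:WeakSolution_NStp}, using $\uvel\cdot\grad\uvel=\Div(\uvel\otimes\uvel)$ and the orthogonality relations $\proj(\vvel\otimes\wvel)=\proj(\wvel\otimes\vvel)=0$, one sees that $\vvel$ is a weak solution of the steady-state Oseen system with right-hand side $\proj f-\Div\bp{\vvel\otimes\vvel+\proj(\wvel\otimes\wvel)}$, while $\wvel$ solves the time-periodic Oseen system \eqref{sys:NSlintp_TPFS} with right-hand side $\projcompl f-\Div\bp{\vvel\otimes\wvel+\wvel\otimes\vvel+\projcompl(\wvel\otimes\wvel)}$. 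From Definition~\ref{def:WeakSolution_NStp} and the embedding $\DSRNsigma{1}{2}(\R^3)\embeds\LR 6(\R^3)$ one has $\grad\uvel\in\LR 2(\torus\times\R^3)$ and $\uvel\in\LR 2(\torus;\LR 6(\R^3))$; hence $\vvel\in\LR 6(\R^3)$ with $\grad\vvel\in\LR 2(\R^3)$, and $\wvel\in\LR\infty(\torus;\LR 2(\R^3))\cap\LR 2(\torus;\LR 6(\R^3))\cap\LR r(\torus\times\R^3)$ with $\grad\wvel\in\LR 2(\torus\times\R^3)$ and $r>5$. Interpolation gives $\wvel\in\LR s(\torus\times\R^3)$ for $s\in[10/3,r]$, so that the Serrin-type exponent $r>5$ is at our disposal.

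I would then run a bootstrap using, for $\vvel$, the homogeneous $\LR q$-estimates for the steady Oseen operator (\cite[Chapter~VII]{GaldiBookNew}), and for $\wvel$, the $\LR q$ maximal-regularity estimates for the time-periodic Oseen system (\cite{Kyed_habil,EiterKyed_tplinNS_PiFbook}). The starting point is $\vvel\otimes\vvel\in\LR 3(\R^3)$, $\wvel\otimes\wvel\in\LR{r/2}(\torus\times\R^3)\cap\LR\infty(\torus;\LR 1(\R^3))$, and $\vvel\otimes\wvel,\wvel\otimes\vvel\in\LR s(\torus\times\R^3)$ for some $s>1$ by H\"older's inequality; applying the linear estimates to these divergence-form data improves the integrability of $\grad\vvel,\grad\wvel$, and, through the anisotropic Oseen gains together with Sobolev embedding, of $\vvel,\wvel$ themselves, which feeds back into the quadratic terms. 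Since $f\in\CRci$ is arbitrarily integrable, finitely many iterations yield $\grad^2\vvel\in\LR r(\R^3)$ for all $r\in(1,\infty)$, $\grad\vvel\in\LR{4q/(4-q)}(\R^3)$ and $\vvel\in\LR{2q/(2-q)}(\R^3)$ for all $q\in(1,2)$, as well as $\wvel\in\LR q(\torus;\WSR 2 q(\R^3))\cap\WSR 1 q(\torus;\LR q(\R^3))$ for all $q\in(1,\infty)$; by (anisotropic) Sobolev embedding $\uvel=\vvel+\wvel$ and $\uvel\cdot\grad\uvel$ are then locally H\"older continuous, and a final bootstrap in the interior regularity theory for the linear Oseen and heat operators, using $f\in\CRi$, gives $\uvel\in\CRi(\torus\times\R^3)^3$. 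The pressure is obtained as usual: by de~Rham's theorem the distributional residual of the momentum equation is a gradient $-\grad\upres$, and the regularity already established for $\uvel$ and $f$ then forces $\upres\in\CRi(\torus\times\R^3)$ and that \eqref{sys:NStp_Decay} holds pointwise.

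The main obstacle is the bootstrap itself: one must control the coupling between the steady-state and purely periodic components through the cross terms $\vvel\otimes\wvel$ and $\wvel\otimes\vvel$, and carefully track the anisotropic $\LR q$-gains of the Oseen operators --- which differ from the isotropic Stokes gains and also differ between the steady-state and the time-periodic settings --- so as to reach the full claimed range of exponents. Once the Serrin-type threshold guaranteed by \eqref{el:VelocityAdditionalIntegrability} has been crossed, the scheme closes, and I expect this part to follow essentially verbatim the corresponding argument in \cite{GaldiKyed_TPSolNS3D_AsymptoticProfile}.
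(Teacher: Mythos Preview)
Your proposal is correct in spirit and, as you yourself anticipate at the end, tracks the argument of \cite[Lemma~5.1]{GaldiKyed_TPSolNS3D_AsymptoticProfile}. The paper does not reproduce this proof at all: its entire proof of the lemma is the single sentence ``We refer to \cite[Lemma~5.1]{GaldiKyed_TPSolNS3D_AsymptoticProfile}'', so there is no independent argument to compare against beyond the cited source.
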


\begin{proof}
We refer to \cite[Lemma 5.1]{GaldiKyed_TPSolNS3D_AsymptoticProfile}.
\end{proof}

We also need a uniqueness statement 
for solutions to the linear problem \eqref{sys:NSlintp_TPFS}.

\begin{lem}\label{lem:NSlintp_Uniqueness}
Let $\np{\uvel,\upres}\in\TDR(\grp)^{3+1}$ be a solution 
to \eqref{sys:NSlintp_TPFS} for the right-hand side $f=0$.
Then, $\proj\uvel$ is a polynomial in each component and $\projcompl\uvel=0$.
\end{lem}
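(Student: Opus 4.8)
The plan is to pass to the Fourier transform on the locally compact abelian group $\grp=\torus\times\R^3$, under which $\FT_\grp$ is an isomorphism of $\TDR(\grp)$ onto $\TDR(\dualgrp)$ with $\dualgrp=\Z\times\R^3$. Applying $\FT_\grp$ to \eqref{sys:NSlintp_TPFS} with $f=0$ turns the system into an algebraic one: abbreviating by $\sopr(k,\xi)\coloneqq\snorm{\xi}^2+i\np{\perf k-\rey\xi_1}$ the symbol of the time-periodic Oseen operator --- the same symbol occurring in \eqref{eq:tpfundsol_deffundsolcompl} --- one obtains
\[
\sopr(k,\xi)\,\FT_\grp\nb{\uvel}(k,\xi)+i\xi\,\FT_\grp\nb{\upres}(k,\xi)=0,
\qquad
\xi\cdot\FT_\grp\nb{\uvel}(k,\xi)=0
\]
in $\TDR(\dualgrp)$.

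First I would eliminate the pressure: taking the scalar product of the first identity with $\xi$ and inserting the divergence constraint yields $\snorm{\xi}^2\,\FT_\grp\nb{\upres}=0$, so $\FT_\grp\nb{\upres}$ is supported in $\Z\times\set{0}$. Then I would locate the support of $\FT_\grp\nb{\uvel}$: since $\realpart\sopr(k,\xi)=\snorm{\xi}^2$ vanishes only for $\xi=0$, the reciprocal $1/\sopr$ is smooth on $\setc{(k,\xi)}{\xi\neq0}$, and multiplying the identity $\sopr\,\FT_\grp\nb{\uvel}=-i\xi\,\FT_\grp\nb{\upres}$ by it shows that $\FT_\grp\nb{\uvel}$ is supported in $\Z\times\set{0}$ as well.

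For the steady-state part this concludes the argument at once: $\FT_\grp\nb{\proj\uvel}=\delta_\Z(k)\,\FT_\grp\nb{\uvel}$ is a distribution supported at the single point $(0,0)\in\dualgrp$, hence a finite linear combination of $\xi$-derivatives of the Dirac measure, so $\proj\uvel$ is a polynomial in each component (and likewise $\proj\upres$). For the purely periodic part one considers the modes $k\neq0$; there $\sopr(k,\cdot)$ has no zero on $\R^3$ --- its real part vanishes only at $\xi=0$, where its imaginary part is $\perf k\neq0$ --- so $1/\sopr(k,\cdot)$ is a bounded smooth function and $\FT_\grp\nb{\uvel}(k,\cdot)=-i\xi\,\FT_\grp\nb{\upres}(k,\cdot)/\sopr(k,\cdot)$ is again supported at $\xi=0$, i.e.\ each purely periodic time-mode of $\uvel$ is a priori only a polynomial in $x$. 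The step I expect to be the crux is ruling these polynomial modes out: it cannot be done from temperedness alone (division by the Oseen projection is illegitimate, since $\xi\otimes\xi/\snorm{\xi}^2$ is merely a bounded, origin-discontinuous multiplier that cannot act on a general element of $\TDR(\dualgrp)$), so one must feed in the additional summability carried by $\projcompl\uvel$ --- in the situation where the lemma is applied, the integrability of Definition~\ref{def:WeakSolution_NStp} together with \eqref{el:VelocityAdditionalIntegrability}, which passes to each individual time-frequency by Minkowski's inequality on $\torus$ --- and then invoke the elementary fact that a polynomial belonging to $\LR{q}(\R^3)$ with $q<\infty$ vanishes identically; summing over $k\neq0$ gives $\projcompl\uvel=0$.
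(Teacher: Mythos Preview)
Your Fourier-transform-and-support argument is exactly the route the paper takes: transform the system on $\grp$, eliminate the pressure via the divergence condition to obtain $\snorm{\xi}^2\FT_\grp\nb{\upres}=0$, and then locate $\supp\FT_\grp\nb{\uvel}$. The paper, however, is terser at the decisive step: from $\supp\bb{\sopr\,\FT_\grp\nb{\uvel}}\subset\Z\times\set{0}$ together with the fact that the only zero of $\sopr$ is $(0,0)$ it jumps directly to $\supp\FT_\grp\nb{\uvel}\subset\set{(0,0)}$, and hence to $\projcompl\uvel=0$. You are right to hesitate here. Division by $\sopr$ away from its zero set only yields $\supp\FT_\grp\nb{\uvel}\subset\Z\times\set{0}$, so that each time-mode of $\uvel$ is a spatial polynomial; nothing in temperedness forces the modes with $k\neq0$ to vanish. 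Indeed, the pair
\[
\uvel(t,x)=\cos\bp{\perft t}\,a,\qquad
\upres(t,x)=\perft\sin\bp{\perft t}\,a\cdot x
\qquad(a\in\R^3\setminus\set{0})
\]
lies in $\TDR(\grp)^{3+1}$, solves \eqref{sys:NSlintp_TPFS} with $f=0$, and has $\projcompl\uvel\neq0$. Thus the lemma is false as stated, and the paper's deduction $\supp\FT_\grp\nb{\uvel}\subset\set{(0,0)}$ is a genuine gap.

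Your proposed remedy --- import an $\LR{q}$ bound on $\projcompl\uvel$ from the only place the lemma is invoked (Proposition~\ref{prop:RepresentationVelocity}) and kill each polynomial mode by the trivial fact that a polynomial in $\LR{q}(\R^3)$, $q<\infty$, must vanish --- is exactly what is needed, and it is harmless for the paper since the required integrability is available there; in fact the paper already uses the same device one line later to dispose of $\proj\uvel-\proj\Uvel$. Your parenthetical about the Leray projector not acting on $\TDR$ is correct but tangential: it explains why one cannot shortcut the pressure elimination, not why the nonzero polynomial modes persist.
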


\begin{proof}
An application of the Fourier transform $\FT_\grp$ on $\grp$ to
\eqrefsub{sys:NSlintp_TPFS}{1}
yields
\[
\bp{i\perft k + \snorm{\xi}^2-i\rey\xi_1}\ft\uvel+i\xi\ft\upres=0
\]
with $\ft\uvel\coloneqq\FT_\grp\nb{\uvel}$ and $\ft\upres\coloneqq\FT_\grp\nb{\upres}$.
Multiplying this equation with $i\xi$ and using $\Div\uvel=0$, 
we obtain $-\snorm{\xi}^2\ft\upres=0$, so that $\supp\ft\upres\subset\Z\times\set{0}$.
Then, the above equation yields
\[
\supp\bb{\np{i\perft k + \snorm{\xi}^2-i\rey\xi_1}\ft\uvel}
=\supp\bb{-i\xi\ft\upres}\subset\Z\times\set{0}.
\]
Because the only zero of $\np{k,\xi}\mapsto\np{i\perf k + \snorm{\xi}^2-i\rey\xi_1}$
is $(k,\xi)=(0,0)$, we conclude 
$\supp\ft\uvel\subset\set{(0,0)}$.
Thus we obtain $\projcompl\uvel=0$ and that $\proj\uvel$ is a polynomial.
\end{proof}

These lemmas enable us to derive the following representation formulas.

\begin{prop}\label{prop:RepresentationVelocity}
Let $\uvel$ be a weak solution as in Theorem \ref{thm:VelocityExpansion}.
Then 
\begin{align}\label{eq:RepresentationVelocity_uGradu}
\D_x^\alpha\uvel=\D_x^\alpha\fundsolvel\ast\nb{f-\uvel\cdot\grad\uvel}
\end{align}
for all $\alpha\in\N_0^3$ with $\snorm{\alpha}\leq1$.
In particular,
$\vvel\coloneqq\proj\uvel$ and $\wvel\coloneqq\projcompl\uvel$ satisfy 
\begin{align}
\D_x^\alpha\vvel&=\D_x^\alpha\fundsolvelss\ast\bb{\proj f - \vvel\cdot\grad\vvel-\proj\np{\wvel\cdot\grad\wvel}},
\label{eq:RepresentationVelocity_uGradu_ss}
\\
\D_x^\alpha\wvel&=\D_x^\alpha\fundsolvelpp\ast
\bb{\projcompl f - \vvel\cdot\grad\wvel - \wvel\cdot\grad\vvel-\projcompl\np{\wvel\cdot\grad\wvel}}.
\label{eq:RepresentationVelocity_uGradu_pp}
\end{align}
Moreover, we have%
\footnote{
Here we set 
$\np{\grad\fundsolvel\ast\Uvel}_j\coloneqq\partial_m \fundsolveljl\ast\Uvel_{jm}$
for an $\R^{3\times3}$-valued function $\Uvel$.
}
\begin{align}
\uvel&=\fundsolvel\ast f - \grad\fundsolvel\ast\np{\uvel\otimes\uvel},
\label{eq:RepresentationVelocity_uTensoru}
\\
\vvel&=\fundsolvelss\ast\proj f 
-\grad\fundsolvelss\ast\bb{\vvel\otimes\vvel+\proj\np{\wvel\otimes\wvel}},
\label{eq:RepresentationVelocity_uTensoru_ss}
\\
\wvel&=\fundsolvelpp\ast\projcompl f 
-\grad\fundsolvelpp\ast\bb{\vvel\otimes\wvel +\wvel\otimes\vvel+\projcompl\np{\wvel\otimes\wvel}}.
\label{eq:RepresentationVelocity_uTensoru_pp}
\end{align}
\end{prop}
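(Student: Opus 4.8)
The plan is to first prove the identity \eqref{eq:RepresentationVelocity_uGradu}, and then to obtain all the remaining formulas by splitting into steady-state and purely periodic parts and by a single integration by parts. By Lemma \ref{lem:RegularityNS} the weak solution $\uvel$ is smooth, satisfies \eqref{sys:NStp_Decay} pointwise together with a pressure $\upres$, and $\proj\uvel$, $\grad\proj\uvel$ and $\projcompl\uvel$ enjoy the listed $\LR{q}$-integrability; combined with \eqref{el:VelocityAdditionalIntegrability} this makes the nonlinear term $g\coloneqq f-\uvel\cdot\grad\uvel$ — equivalently, since $\Div\uvel=0$, the tensor $\uvel\otimes\uvel$ together with its divergence — summable enough that all the convolutions $\fundsolvel\ast g$, $\D_x^\alpha\fundsolvel\ast g$ with $\snorm{\alpha}\le1$, and $\grad\fundsolvel\ast\np{\uvel\otimes\uvel}$ converge absolutely; here one uses the summability \eqref{est:tpfundsol_ComplSummability}, \eqref{est:tpfundsol_ComplSummabilityGradient} of $\fundsolvelpp$, $\grad\fundsolvelpp$ and the pointwise bounds \eqref{est:fundsolss_Decay} for $\fundsolvelss$, $\grad\fundsolvelss$. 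Now both $\uvel$ and $\fundsolvel\ast g$, with suitable associated pressures, solve the linear time-periodic Oseen system \eqref{sys:NSlintp_TPFS} with right-hand side $g$: the former by Lemma \ref{lem:RegularityNS}, the latter by the defining property of $\fundsolvel$. Hence their difference solves the homogeneous system, so Lemma \ref{lem:NSlintp_Uniqueness} forces its purely periodic part to vanish and its steady-state part to be a polynomial in each component. Since $\proj\uvel$ and $\proj\np{\fundsolvel\ast g}=\fundsolvelss\ast\proj g$ both lie in Lebesgue spaces over $\R^3$ — for $\proj\uvel$ by Lemma \ref{lem:RegularityNS}, for $\fundsolvelss\ast\proj g$ by the mapping properties of the steady Oseen kernel — that polynomial is zero, and $\uvel=\fundsolvel\ast g$. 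Differentiating and moving $\D_x^\alpha$ onto the kernel (legitimate by the summability just noted) gives \eqref{eq:RepresentationVelocity_uGradu}.

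Next I split off the two parts. By \eqref{eq:tpfundsol_decompVel}, $\fundsolvel=\fundsolvelss\otimes\onedist_\torus+\fundsolvelpp$. Convolution with $\onedist_\torus$ over the normalized torus is the time mean, so $\np{\fundsolvelss\otimes\onedist_\torus}\ast_\grp g=\fundsolvelss\ast_{\R^3}\proj g$ is time independent, while $\FT_\grp[\fundsolvelpp]$ is supported in $\set{k\neq0}$, so $\fundsolvelpp\ast g=\fundsolvelpp\ast\projcompl g$ is purely periodic. Applying $\proj$ and $\projcompl$ to \eqref{eq:RepresentationVelocity_uGradu} therefore gives $\D_x^\alpha\proj\uvel=\D_x^\alpha\fundsolvelss\ast\proj g$ and $\D_x^\alpha\projcompl\uvel=\D_x^\alpha\fundsolvelpp\ast\projcompl g$. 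To finish, I identify $\proj g$ and $\projcompl g$: writing $\uvel=\vvel+\wvel$ and expanding $\uvel\cdot\grad\uvel$ into its four bilinear terms, the identities $\proj\vvel=\vvel$, $\proj\wvel=0$ give $\proj\np{\uvel\cdot\grad\uvel}=\vvel\cdot\grad\vvel+\proj\np{\wvel\cdot\grad\wvel}$ and $\projcompl\np{\uvel\cdot\grad\uvel}=\vvel\cdot\grad\wvel+\wvel\cdot\grad\vvel+\projcompl\np{\wvel\cdot\grad\wvel}$; together with $\proj g=\proj f-\proj\np{\uvel\cdot\grad\uvel}$ and the analogous identity for $\projcompl g$ this yields \eqref{eq:RepresentationVelocity_uGradu_ss} and \eqref{eq:RepresentationVelocity_uGradu_pp}.

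For the divergence form, $\Div\uvel=0$ gives $\uvel\cdot\grad\uvel=\Div\np{\uvel\otimes\uvel}$ pointwise, hence $\fundsolvel\ast\np{\uvel\cdot\grad\uvel}=\fundsolvel\ast\Div\np{\uvel\otimes\uvel}=\grad\fundsolvel\ast\np{\uvel\otimes\uvel}$ in the sense of the footnote, the last equality being integration by parts, again justified by the summability of $\grad\fundsolvel$ against $\uvel\otimes\uvel$. Substituting into $\uvel=\fundsolvel\ast g$ gives \eqref{eq:RepresentationVelocity_uTensoru}, and projecting exactly as above — now with $\proj\np{\uvel\otimes\uvel}=\vvel\otimes\vvel+\proj\np{\wvel\otimes\wvel}$ and $\projcompl\np{\uvel\otimes\uvel}=\vvel\otimes\wvel+\wvel\otimes\vvel+\projcompl\np{\wvel\otimes\wvel}$ — produces \eqref{eq:RepresentationVelocity_uTensoru_ss} and \eqref{eq:RepresentationVelocity_uTensoru_pp}.

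The structural parts are soft — linearity, the Fourier support of $\fundsolvelpp$, and the algebra of $\proj$ and $\projcompl$. The real work, and the only delicate point, is the integrability bookkeeping: one must check that each of the convolutions above is absolutely convergent and that the spatial derivatives may indeed be moved under the convolution sign. This amounts to matching the $\LR{q}$- and decay information on $\uvel$, $\grad\uvel$ and $\uvel\otimes\uvel$ coming from Lemma \ref{lem:RegularityNS} and \eqref{el:VelocityAdditionalIntegrability} against the summability of $\fundsolvel$ and $\grad\fundsolvel$ recorded in \eqref{est:fundsolss_Decay}, \eqref{est:tpfundsol_ComplSummability} and \eqref{est:tpfundsol_ComplSummabilityGradient}, and — in the uniqueness step — ensuring that $\fundsolvelss\ast\proj g$ belongs to a space containing no nonzero polynomial.
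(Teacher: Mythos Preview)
Your proposal is correct and follows essentially the same route as the paper: define $\Uvel=\fundsolvel\ast(f-\uvel\cdot\grad\uvel)$, invoke Lemma~\ref{lem:NSlintp_Uniqueness} on the difference $\uvel-\Uvel$, and kill the resulting steady-state polynomial by showing $\proj\uvel-\proj\Uvel$ lies in a Lebesgue space (the paper uses $\fundsolvelss\in\LR{12/5}(\R^3)$ and Young's inequality to place $\proj\Uvel$ in $\LR{6}(\R^3)$), then split via $\proj$, $\projcompl$ and integrate by parts. Your final paragraph correctly flags the only genuine work---the integrability bookkeeping---and the ingredients you list are the right ones.
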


\begin{proof}
From Lemma \ref{lem:RegularityNS} we conclude
$\uvel\cdot\grad\uvel\in\LR{q}(\torus\times\R^3)$ for all $q\in(1,\infty)$.
Therefore,
$\Uvel\coloneqq\fundsolvel\ast\np{f-\uvel\cdot\grad\uvel}$ is well defined as 
a classical convolution integral,
and we have
$\partial_j\Uvel=\partial_j\fundsolvel\ast\np{f-\uvel\cdot\grad\uvel}$ for $j=1,2,3$ by the 
dominated convergence theorem.
Since both $\Uvel$ and $\uvel$ satisfy the time-periodic Oseen system \eqref{sys:NSlintp_TPFS}
for suitable pressure functions $\upres$,
Lemma \ref{lem:NSlintp_Uniqueness} implies
$\projcompl\uvel=\projcompl\Uvel$ and that $\proj\uvel-\proj\Uvel$ is a polynomial
in each component.
With Young's inequality we obtain
$\proj\Uvel\in\LR{6}(\R^3)$
since $\fundsolvelss\in\LR{12/5}(\R^3)$ by \cite[Lemma 5.4]{GaldiKyed_TPSolNS3D_AsymptoticProfile}. 
Hence, $\proj\uvel-\proj\Uvel\in\LR{6}(\R^3)$.
This leads to $\proj\uvel=\proj\Uvel$ and thus $\uvel=\Uvel$, 
which yields \eqref{eq:RepresentationVelocity_uGradu}.
The remaining formulas now follow from
\begin{align*}
\vvel
&=\proj\uvel
=\np{\fundsolvelss\otimes\onedist_\torus}\ast\bb{f-\uvel\cdot\grad\uvel}
=\fundsolvelss\ast\bb{\proj\np{f-\uvel\cdot\grad\uvel}}, 
\\
\wvel
&=\projcompl\uvel
=\fundsolvelpp\ast\bb{f-\uvel\cdot\grad\uvel}
=\fundsolvelpp\ast\bb{\projcompl \np{f-\uvel\cdot\grad\uvel}}
\end{align*}
together with the identity $\uvel\cdot\grad\uvel=\Div\np{\uvel\otimes\uvel}$ and integration by parts.
\end{proof}

Based on these formulas, we can now prove Theorem \ref{thm:VelocityDecay}
and Theorem \ref{thm:VelocityExpansion}.

\begin{proof}[Proof of Theorem \ref{thm:VelocityDecay}]
We split $\uvel=\vvel+\wvel$ into steady-state part $\vvel\coloneqq\proj\uvel$ 
and purely periodic part $\wvel\coloneqq\projcompl\uvel$.
By \cite[Theorem 2.2]{GaldiKyed_TPSolNS3D_AsymptoticProfile} we have
\eqref{eq:VelocityExpansion_tp} with $\snorm{\restterm(t,x)}\leq C\snorm{x}^{-5/4}$.
In virtue of \eqref{est:fundsolss_Decay} and $\uvel\in\CRi(\torus\times\R^3)^3$, this implies 
\begin{align}
\snorm{\vvel(x)}
&\leq C \np{1+\snorm{x}}^{-1}\bp{1+\wakefct{\rey x}}^{-1/4}, 
\label{est:VelocityDecayFromAsymptExp_ss_improved}
\\
\snorm{\wvel(t,x)}
&\leq C \np{1+\snorm{x}}^{-5/4}
\label{est:VelocityDecayFromAsymptExp_pp_improved}
\end{align}
for all $t\in\torus$ and $x\in\R^3$.
This leads to
\[
\snorml{\vvel\otimes\vvel+\proj\nb{\wvel\otimes\wvel}}(x)
\leq C \np{1+\snorm{x} }^{-2}\np{1+\wakefct{\rey x}}^{-1/2}.
\]
Therefore, \eqref{eq:RepresentationVelocity_uTensoru_ss}, \eqref{est:LeadingTermss}
and Theorem \ref{thm:ConvFundsolss} yield
\[
\snorm{\vvel(x)}
\leq\snorml{\fundsolvelss\ast \proj f}(x) 
+ \snorml{\grad\fundsolvelss\ast\bb{\vvel\otimes\vvel+\proj\nb{\wvel\otimes\wvel}}}(x)
\leq C \bb{\np{1+\snorm{x}}\np{1+\wakefct{\rey x}}}^{-1},
\]
which is the desired estimate \eqref{est:VelocityDecay_fct_ss}.
Now \eqref{est:VelocityDecay_fct_ss} together with \eqref{est:VelocityDecayFromAsymptExp_pp_improved}
leads to
\begin{equation}\label{est:VelocityDecay_fct_pp01}
\snorml{\vvel\otimes\wvel+\wvel\otimes\vvel+\projcompl\nb{\wvel\otimes\wvel}}(t,x)
\leq C \np{1+\snorm{x}}^{-9/4}.
\end{equation}
Therefore, \eqref{eq:RepresentationVelocity_uTensoru_pp}, \eqref{est:LeadingTermpp}
and Theorem \ref{thm:ConvFundsolpp} imply
\[
\begin{aligned}
\snorm{\wvel(t,x)}
&\leq\snorml{\fundsolvelpp\ast \projcompl f}(t,x) 
+ \snorml{\grad\fundsolvelpp\ast\bb{\vvel\otimes\wvel
+\wvel\otimes\vvel
+\projcompl\nb{\wvel\otimes\wvel}}}(t,x)\\
&\leq C \bp{\np{1+\snorm{x}}^{-3}+\np{1+\snorm{x}}^{-9/4}}
\leq C \np{1+\snorm{x}}^{-9/4}.
\end{aligned}
\]
Using this estimate and \eqref{est:VelocityDecay_fct_ss} again, we conclude
\begin{equation}\label{est:VelocityDecay_fct_pp02}
\snorml{\vvel\otimes\wvel+\wvel\otimes\vvel+\projcompl\nb{\wvel\otimes\wvel}}(t,x)
\leq C \np{1+\snorm{x}}^{-13/4}.
\end{equation}
Repeating the above argument with \eqref{est:VelocityDecay_fct_pp02} instead of
\eqref{est:VelocityDecay_fct_pp01},
we end up with \eqref{est:VelocityDecay_fct_pp}.

Now let us turn to the estimates of $\grad\uvel$.
Due to $\uvel\in\CRi(\torus\times\R^3)$, 
it suffices to consider $\snorm{x}\geq 2$.
Let $R\coloneqq\snorm{x}/2\geq1$.
By Proposition \ref{prop:RepresentationVelocity} we have
\[
\partial_j\vvel=\partial_j\fundsolvelss\ast\proj f-I,
\qquad
\partial_j\wvel=\partial_j\fundsolvelpp\ast\projcompl f - J
\]
with
\[
\begin{aligned}
I
\coloneqq
I_1+I_2
&\coloneqq
\partial_j\fundsolvelss\ast\bb{\vvel\cdot\grad\vvel}
+\partial_j\fundsolvelss\ast\bb{\proj\nb{\wvel\cdot\grad\wvel}}, 
\\
J
\coloneqq J_1+J_2+J_3
&\coloneqq
\partial_j\fundsolvelpp\ast\bb{\vvel\cdot\grad\wvel}
+\partial_j\fundsolvelpp\ast\bb{\wvel\cdot\grad\vvel}
+\partial_j\fundsolvelpp\ast\bb{\projcompl\nb{\wvel\cdot\grad\wvel}}.
\end{aligned}
\]
We estimate these terms separately.
Clearly, $\snorm{I_1}\leq I_{11}+I_{12}$ with
\begin{align*}
I_{11}(x)&\coloneqq
\int_{\ball_{R}} \snorm{\partial_j\fundsolvelss(x-y)}\snorm{\vvel(y)}\snorm{\grad\vvel(y)} \,\dy, 
\\
I_{12}(x)&\coloneqq
\int_{\ball^{R}} \snorm{\partial_j\fundsolvelss(x-y)}\snorm{\vvel(y)}\snorm{\grad\vvel(y)} \,\dy.
\end{align*}
Since $\snorm{y}\leq R$ implies $\snorm{x-y}\geq\snorm{x}/2=R\geq 1$, 
the pointwise estimate \eqref{est:fundsolss_Decay} implies
\begin{align*}
I_{11}(x)
&\leq\int_{\ball_{R}} \bb{\np{1+\snorm{x-y}}\np{1+\wakefct{x-y}}}^{-3/2}\snorm{\vvel(y)}\snorm{\grad\vvel(y)} \,\dy\\
&\leq C \np{1+\snorm{x}}^{-3/2}\norm{\vvel}_{3}\norm{\grad\vvel}_{\frac{3}{2}}
\leq C \np{1+\snorm{x}}^{-3/2}
\end{align*}
in view of Lemma \ref{lem:RegularityNS},
and $\grad\fundsolvelss\in\LR{17/12}(\R^3)$ 
(see \cite[Lemma 5.4]{GaldiKyed_TPSolNS3D_AsymptoticProfile})
and Lemma \ref{lem:RegularityNS}
yield
\[
I_{12}(x)
\leq C \norm{\partial_j\fundsolvelss}_{\frac{17}{12}}
\norm{\grad\vvel}_{\frac{17}{5}}\norm{\vvel}_{\LR{\infty}(\ball^R)}
\leq C \np{1+\snorm{x}}^{-1}
\]
by \eqref{est:VelocityDecay_fct_ss}.
We thus deduce $\snorm{I_1(x)}\leq C \np{1+\snorm{x}}^{-1}$.
For $I_2$ we proceed similarly to obtain $\snorm{I_2(x)}\leq\np{1+\snorm{x}}^{-3/2}$.
From these estimates and \eqref{est:LeadingTermDiffss}, 
we conclude
\begin{equation}\label{est:DecayGradVelss_1}
\snorm{\grad\vvel(x)}\leq C \np{1+\snorm{x}}^{-1}.
\end{equation}
Now let us turn towards $\grad\wvel$. 
As above, we split $J_1$ and estimate $\snorm{J_1}\leq J_{11}+J_{12}$ with
\begin{align*}
J_{11}(t,x)
&\coloneqq\int_\torus\int_{\ball_{R}}\snorm{\partial_j\fundsolvelpp(t-s,x-y)}
\snorm{\vvel(y)}\snorm{\grad\wvel(s,y)}\,\dy\ds, \\
J_{12}(t,x)
&\coloneqq\int_\torus\int_{\ball^{R}}\snorm{\partial_j\fundsolvelpp(t-s,x-y)}
\snorm{\vvel(y)}\snorm{\grad\wvel(s,y)}\,\dy\ds.
\end{align*}
By H\"older's inequality in space and time, from \eqref{est:tpfundsol_ComplPointwiseEst}
we obtain
\[
J_{11}(t,x)
\leq C \Bp{\int_{\ball_{R}} \snorm{x-y}^{-8}\,\dy}^\half
\norm{\vvel}_4\norm{\grad\wvel}_4
\leq C \snorm{x}^{-5/2}
\]
due to Lemma \ref{lem:RegularityNS}.
Moreover, H\"older's inequality and \eqref{est:VelocityDecay_fct_ss}
lead to
\[
J_{12}(t,x)
\leq C \norm{\partial_j\fundsolvelpp}_1 
\norm{\vvel}_{\LR{\infty}(\ball^R)}\norm{\grad\wvel}_{\infty}
\leq C \snorm{x}^{-1}
\]
because $\grad\fundsolvelpp\in\LR{1}(\torus\times\R^3)$ 
by \eqref{est:tpfundsol_ComplSummabilityGradient}
and $\grad\wvel\in\LR{\infty}(\torus\times\R^3)$ by Lemma \ref{lem:RegularityNS} 
and Sobolev embeddings.
In a similar fashion, we can use \eqref{est:VelocityDecay_fct_pp} to estimate 
$J_2$ and $J_3$ and obtain
\begin{align*}
\snorm{J_2(t,x)}
&\leq C \bp{\snorm{x}^{-\frac{5}{2}}\norm{\wvel}_4\norm{\grad\vvel}_4
+\snorm{x}^{-3}\norm{\partial_j\fundsolvelpp}_{\frac{9}{8}}\norm{\grad\vvel}_9}
\leq C \snorm{x}^{-\frac{5}{2}}, 
\\
\snorm{J_3(t,x)}
&\leq C \bp{\snorm{x}^{-\frac{5}{2}}\norm{\wvel}_4\norm{\grad\wvel}_4
+\snorm{x}^{-3}\norm{\partial_j\fundsolvelpp}_{1}\norm{\grad\wvel}_\infty}
\leq C \snorm{x}^{-\frac{5}{2}}.
\end{align*}
Collecting the above estimates and combining them with \eqref{est:LeadingTermpp}, 
we end up with
\begin{equation}\label{est:DecayGradVelpp_1}
\snorm{\grad\wvel(t,x)}\leq C \np{1+\snorm{x}}^{-1}.
\end{equation}

From \eqref{est:VelocityDecay_fct_ss}, \eqref{est:DecayGradVelss_1},
\eqref{est:VelocityDecay_fct_pp} and \eqref{est:DecayGradVelpp_1} we now conclude
\[
\snorml{\vvel(x)\cdot\grad\vvel(x)+\proj\nb{\wvel\cdot\grad\wvel}(x)}
\leq C \np{1+\snorm{x}}^{-2}\np{1+\wakefct{\rey x}}^{-1/2},
\]
so that
\[
\snorm{I(x)}
\leq C \np{1+\snorm{x}}^{-5/4}\np{1+\wakefct{\rey x}}^{-3/4}
\]
by  Theorem \ref{thm:ConvFundsolss}.
Together with \eqref{est:LeadingTermss} we thus obtain
\[
\snorm{\grad\vvel(x)}\leq C \np{1+\snorm{x}}^{-5/4}\np{1+\wakefct{\rey x}}^{-3/4},
\]
so that from \eqref{est:VelocityDecay_fct_ss},
\eqref{est:VelocityDecay_fct_pp} and \eqref{est:DecayGradVelpp_1} we deduce 
\[
\snorml{\vvel(x)\cdot\grad\vvel(x)+\proj\nb{\wvel\cdot\grad\wvel}(x)}
\leq C \np{1+\snorm{x}}^{-9/4}\np{1+\wakefct{\rey x}}^{-7/4}.
\]
By another application of Theorem \ref{thm:ConvFundsolss} 
and combination with \eqref{est:LeadingTermss}, we arrive at
\eqref{est:VelocityDecay_grad_ss}.

For the derivation of \eqref{est:VelocityDecay_grad_pp}
we proceed with a similar bootstrap argument.
From \eqref{est:VelocityDecay_fct_ss}, \eqref{est:VelocityDecay_grad_ss}, 
\eqref{est:VelocityDecay_fct_pp} and \eqref{est:DecayGradVelpp_1} we deduce
\[
\snorml{\vvel\cdot\grad\wvel+\wvel\cdot\grad\vvel+\projcompl\bb{\wvel\cdot\grad\wvel}}(t,x)
\leq C \np{1+\snorm{x}}^{-2},
\]
so that Theorem \ref{thm:ConvFundsolpp} implies
$\snorm{J(t,x)}\leq C \np{1+\snorm{x}}^{-2}$.
Combining this with \eqref{est:LeadingTermpp},
we conclude
\begin{equation}\label{est:DecayGradVelpp_2}
\snorm{\grad\wvel(t,x)}\leq C \np{1+\snorm{x}}^{-2}.
\end{equation}
We now repeat this argument with \eqref{est:DecayGradVelpp_2} instead of 
\eqref{est:DecayGradVelpp_1},
which leads to an improved decay rate for $\grad\wvel$.
Iterating this procedure, we finally arrive at \eqref{est:VelocityDecay_grad_pp}. 
\end{proof}

\begin{proof}[Proof of Theorem \ref{thm:VelocityExpansion}]
We keep the notation from the previous proof.
We have
\begin{equation}\label{est:VelocityExpansion_uuss}
\snorml{\vvel\otimes\vvel+\proj\nb{\wvel\otimes\wvel}}(x)
\leq  C \bb{\np{1+\snorm{x}} \np{1+\wakefct{\rey x}}}^{-2}
\end{equation}
by Theorem \ref{thm:VelocityDecay},
which, by Theorem \ref{thm:ConvFundsolss}, implies
\[
\snormL{\partial_j\fundsolvelss\ast\bb{\vvel\otimes\vvel+\proj\nb{\wvel\otimes\wvel}}(x)}
\leq C \bb{\np{1+\snorm{x}}\bp{1+\wakefct{\rey x}}}^{-3/2}\log\snorm{x}.
\]
In virtue of the representation formula \eqref{eq:RepresentationVelocity_uTensoru_ss}
and 
the identity 
\[
\restterm_0(x)
=\proj\restterm(x)
=\vvel(x)-\fundsolvelss(x) \int_{\Rn} \proj f(y)\,\dy,
\] 
this estimate and \eqref{est:LeadingTermDiffss} 
imply \eqref{est:VelocityExpansion_resttermss_fct}.
Moreover, by Theorem \ref{thm:VelocityDecay} we have
\[
\snorml{\vvel\otimes\wvel+\wvel\otimes\vvel+\projcompl\nb{\wvel\otimes\wvel}}(t,x)
\leq C \np{1+\snorm{x}}^{-4},
\]
so that
\[
\snorml{\partial_j\fundsolvelpp\ast
\bb{\vvel\otimes\wvel+\wvel\otimes\vvel+\projcompl\nb{\wvel\otimes\wvel}}(t,x)}
\leq C\np{1+\snorm{x}}^{-4}
\]
by Theorem \ref{thm:ConvFundsolpp}.
Now \eqref{est:VelocityExpansion_resttermpp_fct} 
is a consequence of this estimate and \eqref{est:LeadingTermDiffpp}.

To show \eqref{est:VelocityExpansion_resttermss_grad}, 
at first observe that Theorem \ref{thm:VelocityDecay} implies 
\begin{equation}\label{est:VelocityExpansion_ugraduss}
\snorml{\vvel(x)\cdot\grad\vvel(x)+\proj\nb{\wvel\cdot\grad\wvel}(x)}
\leq C \bb{\np{1+\snorm{x}}\np{1+\wakefct{\rey x}}}^{-5/2}.
\end{equation}
Let
$\cutoff\in\CRci(\R^3)$ such that $\cutoff(x)=1$ for $\snorm{x}\leq 1$
and $\cutoff(x)=0$ for $\snorm{x}\geq 2$. 
We decompose 
\[
I=
\bb{\cutoff\partial_j\fundsolvelss}
\ast\bb{\vvel\cdot\grad\vvel+\proj\np{\wvel\cdot\grad\wvel}}
+ \bb{\np{1-\cutoff}\partial_j\fundsolvelss}
\ast\bb{\vvel\cdot\grad\vvel+\proj\np{\wvel\cdot\grad\wvel}}
\eqqcolon K_1+K_2.
\]
Then
\[
\snorm{K_1}
\leq C\int_{\ball_2(x)}\snorml{\partial_j\fundsolvelss(x-y)}
\bb{\np{1+\snorm{y}}\np{1+\wakefct{\rey y}}}^{-5/2}\,\dy
\]
by \eqref{est:VelocityExpansion_ugraduss}.
As in the proof of Lemma \ref{lem:LeadingTerm}, 
from $\snorm{x-y}\leq2\leq\snorm{x}/2$ we conclude $\snorm{y}\geq\snorm{x}/2\geq 2$ 
and $\np{1+4\snorm{\rey}}\np{1+\wakefct{\rey y}}\geq 1+\wakefct{\rey x}$. 
Since $\grad\fundsolvelss\in\LRloc{1}(\R^3)$, this implies
\[
\snorm{K_1}
\leq C \bb{\np{1+\snorm{x}}\np{1+\wakefct{\rey x}}}^{-5/2}
\int_{\ball_2(x)}\snorm{\partial_j\fundsolvelss(x-y)}\,\dy
\leq C \bb{\np{1+\snorm{x}}\np{1+\wakefct{\rey x}}}^{-5/2}.
\]
By integration by parts and \eqref{est:fundsolss_Decay} and 
\eqref{est:VelocityExpansion_uuss}, we further obtain
\begin{align*}
\snorm{K_2}
&\leq C\int_{\R^3}\snorm{1-\cutoff(x-y)}\,\snorml{\partial_j\grad\fundsolvelss(x-y)}\,
\snorml{\vvel\otimes\vvel+\proj\nb{\wvel\otimes\wvel}}(y)\, \dy\\
&\qquad\qquad+ C\int_{\R^3}\snorm{\grad\cutoff(x-y)}\snorml{\partial_j\fundsolvelss(x-y)}\,
\snorml{\vvel\otimes\vvel+\proj\nb{\wvel\otimes\wvel}}(y)\,\dy \\
&\leq C\int_{\ball^1(x)}\bb{\snorm{x-y}\np{1+\wakefct{\rey\np{x-y}}}}^{-2}
\bb{\np{1+\snorm{y}} \np{1+\wakefct{\rey y}}}^{-2}\, \dy\\
&\qquad\qquad+ C\int_{\ball_{1,2}(x)}\bb{\snorm{x-y}\np{1+\wakefct{\rey\np{x-y}}}}^{-3/2}
\bb{\np{1+\snorm{y}} \np{1+\wakefct{\rey y}}}^{-2}\,\dy.
\end{align*}
For the first integral we use Lemma \ref{lem:ConvEst_WakeFct_Two}, 
and for the second one we argue as for $K_1$ to deduce
\begin{align*}
\snorm{K_2}
&\leq C\bp{\np{1+\snorm{x}}\np{1+\wakefct{\rey x}}}^{-2}
\max\setL{1,\log\Bp{\frac{\snorm{x}}{1+\wakefct{\rey x}}}}\\
&\qquad
+C\bb{\np{1+\snorm{x}} \np{1+\wakefct{\rey x}}}^{-2}
\int_{\ball_{1,2}(x)}\bb{\snorm{x-y}\np{1+\wakefct{\rey\np{x-y}}}}^{-3/2}\,\dy \\
&\leq C\bp{\np{1+\snorm{x}}\np{1+\wakefct{\rey x}}}^{-2}
\max\setL{1,\log\Bp{\frac{\snorm{x}}{1+\wakefct{\rey x}}}}.
\end{align*}
Combining the estimates of $K_1$ and $K_2$ with
\eqref{est:LeadingTermDiffss},
from formula \eqref{eq:RepresentationVelocity_uGradu_ss}
we obtain \eqref{est:VelocityExpansion_resttermss_grad}.

Furthermore, Theorem \ref{thm:VelocityDecay} implies
\[
\snorml{\vvel\cdot\grad\wvel+\wvel\cdot\grad\vvel+\projcompl\bb{\wvel\cdot\grad\wvel}}(t,x)
\leq C \np{1+\snorm{x}}^{-9/2}\np{1+\wakefct{\rey x}}^{-3/2}.
\]
With an argument similar to before, we now deduce
\begin{align*}
\snorm{J(t,x)}
&\leq C\int_{\ball_1(x)}\snorm{\partial_j\fundsolvelpp(x-y)}
\np{1+\snorm{y}}^{-9/2}\np{1+\wakefct{\rey y}}^{-3/2}\,\dy 
\\
&\qquad\qquad\qquad+C\int_{\ball^1(x)}\snorm{\partial_j\fundsolvelpp(x-y)}
\np{1+\snorm{y}}^{-9/2}\np{1+\wakefct{\rey y}}^{-3/2}\,\dy
\\
&\leq C \Bp{\np{1+\snorm{x}}^{-9/2}\np{1+\wakefct{\rey x}}^{-3/2}
\norm{\partial_j\fundsolvelpp}_1
+\int_{\ball^1(x)}\snorm{x-y}^{-4}
\np{1+\snorm{y}}^{-9/2}\,\dy}\\
&\leq C\np{1+\snorm{x}}^{-9/2}\np{1+\wakefct{\rey x}}^{-1}
\end{align*}
where we used \eqref{est:tpfundsol_ComplSummabilityGradient}.
Combining this estimates with \eqref{est:LeadingTermDiffpp},
formula
\eqref{eq:RepresentationVelocity_uGradu_pp} 
implies \eqref{est:VelocityExpansion_resttermpp_grad}.

Finally, the asymptotic expansion \eqref{eq:VelocityExpansion_tp} with the asserted estimates of 
$\restterm(t,x)$ is a direct consequence of these results and 
the pointwise estimates of $\fundsolvelpp$ from \eqref{est:tpfundsol_ComplPointwiseEst}.
\end{proof}

\bibliographystyle{abbrv}

\begin{thebibliography}{10}

\bibitem{babenko1973}
K.~Babenko.
\newblock {On stationary solutions of the problem of flow past a body of a
  viscous incompressible fluid}.
\newblock {\em Math. USSR, Sb.}, 20:1--25, 1973.

\bibitem{Bruhat61}
F.~Bruhat.
\newblock {Distributions sur un groupe localement compact et applications \`a
  l'\'etude des repr\'esentations des groupes $p$-adiques.}
\newblock {\em Bull. Soc. Math. Fr.}, 89:43--75, 1961.

\bibitem{EiterKyed_tplinNS_PiFbook}
T.~Eiter and M.~Kyed.
\newblock {Time-periodic linearized {N}avier-{S}tokes Equations: An approach
  based on {F}ourier multipliers}.
\newblock In {\em {Particles in Flows}}, Adv. Math. Fluid Mech., pages 77--137.
  Birkh\"{a}user/Springer, Cham, 2017.

\bibitem{EiterKyed_etpfslns}
T.~Eiter and M.~Kyed.
\newblock Estimates of time-periodic fundamental solutions to the linearized
  {N}avier-{S}tokes equations.
\newblock {\em J. Math. Fluid Mech.}, 20(2):517--529, 2018.

\bibitem{Farwig_habil}
R.~Farwig.
\newblock {Das station\"are Au\ss enraumproblem der Navier-Stokes-Gleichungen
  bei nichtverschwindender Anstr\"omgeschwindigkeit in anisotrop gewichteten
  Sobolevr\"aumen}.
\newblock SFB 256 preprint no. 110 (Habilitationsschrift). University of Bonn
  (1990).

\bibitem{FarwigOseenAnisotropicallyWeightedSob}
R.~Farwig.
\newblock {The stationary exterior 3D-problem of Oseen and Navier-Stokes
  equations in anisotropically weighted spaces}.
\newblock {\em Math. Z.}, 211(3):409--448, 1992.

\bibitem{Finn1960}
R.~Finn.
\newblock {An energy theorem for viscous fluid motions}.
\newblock {\em Arch. Ration. Mech. Anal.}, 6:371--381, 1960.

\bibitem{Finn1959b}
R.~Finn.
\newblock {Estimates at infinity for stationary solutions of the Navier-Stokes
  equations}.
\newblock {\em Bull. Math. Soc. Sci. Math. Phys. R.P.R., n. Ser.},
  3(51):387--418, 1960.

\bibitem{Finn1965a}
R.~Finn.
\newblock {On the exterior stationary problem for the Navier-Stokes equations,
  and associated perturbation problems}.
\newblock {\em Arch. Ration. Mech. Anal.}, 19:363--406, 1965.

\bibitem{GaldiSohr2004}
G.~Galdi and H.~Sohr.
\newblock Existence and uniqueness of time-periodic physically reasonable
  {N}avier-{S}tokes flow past a body.
\newblock {\em Arch. Ration. Mech. Anal.}, 172(3):363--406, 2004.

\bibitem{GaldiBookNew}
G.~P. Galdi.
\newblock {\em {An introduction to the mathematical theory of the Navier-Stokes
  equations. Steady-state problems. 2nd ed.}}
\newblock {New York: Springer}, 2011.

\bibitem{GaldiKyed_TPSolNS3D_AsymptoticProfile}
G.~P. Galdi and M.~Kyed.
\newblock {Time-periodic solutions to the {N}avier-{S}tokes equations in the
  three-dimensional whole-space with a non-zero drift term: Asymptotic profile
  at spatial infinity}.
\newblock In {\em {Mathematical Analysis in Fluid Mechanics: Selected Recent
  Results}}, volume 710 of {\em Contemp. Math.}, pages 121--144. Amer. Math.
  Soc., Providence, RI, 2018.

\bibitem{KracmarNovotnyPokorny2001}
S.~Kra\v{c}mar, A.~Novotn\'y, and M.~Pokorn\'y.
\newblock {Estimates of Oseen kernels in weighted $L^p$ spaces}.
\newblock {\em J. Math. Soc. Japan}, 53(1):59--111, 2001.

\bibitem{Kyed_habil}
M.~Kyed.
\newblock {Time-Periodic Solutions to the Navier-Stokes Equations}.
\newblock {\em {Habilitationsschrift, Technische Universit{\"a}t Darmstadt}},
  2012.

\end{thebibliography}

\smallskip\par\noindent
Weierstrass Institute for Applied Analysis and Stochastics\\ 
Mohrenstra\ss{}e 39, 10117 Berlin, Germany\\
Email: {\texttt{thomas.eiter@wias-berlin.de}}

\end{document}